\documentclass[10pt,reqno]{amsart}
\usepackage[T1]{fontenc}
\usepackage[utf8]{inputenc}
\usepackage{lmodern}
\usepackage{amsmath,amssymb,mathtools}
\usepackage{booktabs,array}
\usepackage{microtype}
\usepackage{xcolor}
\usepackage[hidelinks]{hyperref}
\usepackage{enumitem}
\setlist{nosep}

\newtheorem{theorem}{Theorem}[section]

\newtheorem{conjecture}[theorem]{Conjecture}
\newtheorem{lemma}[theorem]{Lemma}
\newtheorem{corollary}[theorem]{Corollary}

\newcommand{\src}{\mathrm{src}}
\newcommand{\tgt}{\mathrm{tgt}}
\DeclareRobustCommand{\rev}[1]{#1}

\makeatletter
\@removefromreset{equation}{section}

\renewcommand{\tagform@}[1]{\maketag@@@{(\ignorespaces#1\unskip\@@italiccorr)}}
\let\ams@setauthors\@setauthors
\renewcommand{\@setauthors}{%
  \begingroup
  \let\MakeUppercase\@firstofone
  \ams@setauthors
  \endgroup
}
\makeatother

\title{On \rev{the} Erd\H{o}s Five-Edge Intersection Problem}
\author[Chengrui Fang and Jianfeng Hou]{Chengrui Fang\rev{, }Jianfeng Hou\\[4pt]
\normalfont\footnotesize Center for Discrete Mathematics, Fuzhou University, Fuzhou, China}
\date{}

\begin{document}

\begin{abstract}
For an $n$-vertex graph $G$ and a permutation $\pi$ of its vertex set, let
\[
 I_G(\pi)=|E(G)\cap E(G_{\pi})|,\qquad \mu(G)=\min_{\pi} I_G(\pi),
\]
where $G_{\pi}$ is the copy of $G$ obtained by relabelling every vertex $x\in V(G)$ as $\pi(x)$. 
Let $f(n,k)$ be the minimum number of edges in an $n$-vertex graph $G$ satisfying $\mu(G)\ge k$.
Erd\H{o}s recorded a construction of Mullin showing $f(n,5)\le 2n-2$ and asked whether equality holds for sufficiently large $n$.
We prove that it does:
\[
 f(n,5)=2n-2
\]
for all sufficiently large $n$.

The proof  strategy is a core--buffer--completion framework: it moves the few high-degree vertices  into carefully chosen low-degree positions, confines the allowed overlap to this bounded part, and then relabels the sparse remainder without creating any additional common edge.
\end{abstract}


\maketitle

\section{Introduction}

Let $G$ be a finite simple graph on vertex set $V$. For a permutation $\pi$ of $V$, \rev{let $G_{\pi}$ be} the copy of $G$ obtained by relabelling every vertex $x\in V$ as $\pi(x)$. More precisely,  $G_{\pi}$ is the graph on the same vertex set $V$ with edge set 
\[
E(G_{\pi})=\{\,\pi(x)\pi(y):xy\in E(G)\,\}.
\]
Define 
\[
 I_G(\pi):=|E(G)\cap E(G_{\pi})|,\qquad
 \mu(G):=\min_{\pi} I_G(\pi), 
\]
where  the minimum is taken over all permutations of  $V$. For an integer $k\ge1$, define
\[
 f(n,k):=\min\{\,|E(G)|: |V(G)|=n,\ \mu(G)\ge k\,\}.
\]
Erd\H{o}s~\cite{Erdos} recorded this parameter and asked to determine or estimate $f(n,k)$. He observed, by a simple averaging argument, that
\begin{equation}\label{eq:f(n,k)-Erdos-bounds}
 k^{1/2}n-c\le f(n,k)\le4k^{1/2}n
\end{equation}
for an absolute constant $c$, and for sufficiently large $k$, 
\[
 f(n,k)=(2+o(1))k^{1/2}n. 
\]

In the same paper, Erd\H{o}s \cite{Erdos} observed that the problem of determining $f(n,k)$ becomes particularly interesting when $k$ is small, and  established the base case $f(n,1)=n-1$. He also noted that Chung, Graham, and Murty had shown $f(n,2)=f(n,3)=\lfloor 3n/2 \rfloor$. For $k=4$, the complete bipartite graph $K_{2, n-2}$ provides the upper bound $f(n,4)\le 2n-4$, which, in conjunction with \eqref{eq:f(n,k)-Erdos-bounds}, implies $f(n,4)=2n-O(1)$. Turning to $k=5$, Erd\H{o}s mentioned a construction due to Mullin: take a cycle on $n-1$ vertices and join the remaining vertex to every vertex of the cycle. This wheel-type graph has $2n-2$ edges and yields $f(n,5)\le 2n-2$, prompting Erd\H{o}s to conjecture that equality might hold.
\begin{conjecture}\label{Conj:Erdos-conj-f(n,5)}
  $f(n,5)= 2n-2$. 
\end{conjecture}
\rev{\.{Z}ak's} general bounds~\cite[Lemma~16 and Theorem~17]{Zak} \rev{imply} that for $n>6$
\begin{equation*}
 \sqrt{2n(n-1)}+1<f(n,5)\le 2n-2.
\end{equation*}
In this paper, \rev{we confirm} Conjecture \ref{Conj:Erdos-conj-f(n,5)} for sufficiently large $n$. 
\begin{theorem}\label{thm:main}
There exists $n_0$ such that for every $n\ge n_0$,
\[
 f(n,5)=2n-2.
\]
\end{theorem}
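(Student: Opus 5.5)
The upper bound $f(n,5)\le 2n-2$ is Mullin's wheel, recorded above. So the task is the lower bound: every $n$-vertex graph $G$ with $|E(G)|\le 2n-3$ has a permutation $\pi$ with $I_G(\pi)\le 4$. The plan follows the core--buffer--completion strategy announced in the abstract.

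\textbf{Structural reduction.} Let $m=|E(G)|\le 2n-3$, so the average degree is below $4$. Fix a large constant $D$ and let $H$ be the set of vertices of degree at least $D$. Then $|H|\le 2m/D\le 4n/D$. More usefully, the edges incident to $H$ number at most $m$, and a vertex of huge degree forces the rest of the graph to be sparse.

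We split into cases by the number of very-high-degree vertices.
\begin{enumerate}
\item[(a)] There are at least two vertices of linear degree. Then $G$ looks like $K_{2,n-2}$ plus at most one extra edge, or is similarly constrained.
\item[(b)] There is exactly one vertex $v$ of degree $\ge \varepsilon n$. Then $G-v$ has at most $2n-3-\deg v$ edges. In the extremal regime $G-v$ is a forest or a near-forest with at most $n-2$ edges, strictly fewer than the cycle in the wheel. This one-edge deficit is what must yield $I\le4$.
\item[(c)] All degrees are $o(n)$.
\end{enumerate}

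\textbf{Case (c), the generic case.} Build $\pi$ in three stages.
\begin{enumerate}
\item \emph{Core.} Map the bounded set of high-degree vertices (degree $\ge D$) onto chosen low-degree vertices, say vertices of degree at most $3$, which exist in abundance. The images should be pairwise far apart and far from the core itself, so that no edge of $G$ between core vertices maps onto an edge.
\item \emph{Buffer.} Map the neighbourhoods of the core into vertices whose own neighbourhoods avoid the image of the core.
\item \emph{Completion.} Extend to all remaining vertices by a random or greedy bijection. The expected number of common edges among low-degree vertices is $O(m^2/n^2)=O(1)$, and an alteration or switching argument (Lov\'asz local lemma, or swapping pairs) drives it to $0$.
\end{enumerate}
The degrees involved are bounded, so each bad event depends on boundedly many choices. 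In this case we can aim for $I_G(\pi)=0$.

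\textbf{Cases (a) and (b), the main obstacle.} These are where $4$ rather than $0$ is the true threshold. In case (b) we map $v$ to itself, or to a suitable vertex, and we must then control the common edges incident to $v$ and within $G-v$.
\begin{itemize}
\item If $\pi(v)=v$, the common edges at $v$ are $|N(v)\cap \pi(N(v))|$. Since $|N(v)|\le n-1$, this count is at least $2|N(v)|-(n-1)$.
\item Choosing $\pi(v)=w$ with $w\ne v$ of low degree instead makes the edges at $v$ nearly disjoint from $E(G)$. But the non-neighbours of $v$ then absorb the image of $N(v)$, which is feasible only if $\deg v$ is not close to $n-1$.
\end{itemize}
So the hardest subcase is $\deg v\ge n-c$. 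Here $\pi$ must fix $v$, and $G-v$ has at most $n-2$ edges on about $n-1$ vertices, so it is a forest plus possibly one cycle with tree components. We then need a permutation of a near-forest $F$ on $n-1$ vertices with $|E(F)\cap E(F_\pi)|\le 4-(\text{correction from the non-neighbours of } v)$. I would prove a lemma that any graph with at most $N-1$ edges on $N$ large vertices admits a permutation with no common edge, with a finite explicit list of exceptions (stars, and near-stars after the constraint from $v$). This is a packing statement in the spirit of Sauer--Spencer / Bollob\'as--Eldridge for two copies of a tree. The exceptional stars of $G-v$ give a second high-degree vertex and are pushed to case (a), which is handled by a direct computation on $K_{2,n-2}$-like graphs with $2n-3$ edges. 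I expect this bookkeeping of small exceptions, ensuring the residual overlap never exceeds $4$, to be the main obstacle.
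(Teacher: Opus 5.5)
Your plan has a genuine gap in cases (a) and (b), which is where the difficulty lies.

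\textbf{The almost-universal case.} In the subcase $\deg v\ge n-c$ you conclude that $\pi$ must fix $v$. Your own first bullet shows this is hopeless: fixing $v$ already creates at least $2(n-c)-(n-1)=n-2c+1$ common edges, so no packing lemma for the near-forest $G-v$ can rescue it. Your second bullet also misjudges the alternative. The common edges incident with $v$ are images of edges at $\pi^{-1}(v)$, and the images of edges at $v$ are incident with $\pi(v)$. Together they number at most $d(\pi^{-1}(v))+d(\pi(v))$, independently of $\deg v$. So moving $v$ works best precisely when $v$ is nearly universal; this is the $3+3-1=5$ count for the wheel.

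The missing idea is the paper's Lemma~\ref{lem:almost}:
\begin{enumerate}
\item Swap $v$ with a neighbour $u$ that has degree $2$ in $G-v$ and lies away from the $O(c)$ vertices of degree at least $3$ in $G-v$.
\item The bound $e(G)\le 2n-3$ forces either a vertex of degree at most $1$ in $G-v$, handled via $f(N,2)=\lfloor 3N/2\rfloor$, or a non-neighbour $w_0$ of $v$. In the latter case prescribe $\pi(w_0)=y_1\in N(u)$.
\item The common edges meeting $\{u,v\}$ are then $uv$, at most two coming from $u$, and at most one coming from $v$ (via $\pi^{-1}(y_2)$), so at most four.
\item $G-\{u,v\}$ is packed with no overlap by list packing.
\end{enumerate}
This is where the one-edge deficit is actually spent. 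Its consequence, that every vertex of a counterexample has at least $n/15$ non-neighbours, is needed later.

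\textbf{The multi-hub regime.} Case (a) is misdescribed. Two or more vertices of degree at least $\varepsilon n$ do not make $G$ resemble $K_{2,n-2}$; take a bounded number of hubs of degrees $n/3,n/5,\dots$ on top of a sparse remainder. A computation on $K_{2,n-2}$-like graphs therefore leaves the whole multi-hub regime untreated, and the mid-range of case (b) is only gestured at. The paper handles all of it uniformly:
\begin{enumerate}
\item Take the $k$ highest-degree vertices as a core $C$, with $k$ chosen so that $k\,d_{k+1}$ is a small multiple of $n$. A single fixed threshold is not enough: vertices of degree just below $\varepsilon n$ adjacent to the buffers would give forbidden lists of size about $k\varepsilon n$, with $k$ up to $4/\varepsilon$.
\item A degree-deficit count gives linearly many vertices of degree at most $3$ with at most two neighbours in $C$. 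Among them, choose an independent buffer $B$ of size $k$.
\item Swap $C$ and $B$ by random bijections. The at most $2k$ edges between $C$ and $B$ then produce on average at most $(2k)^2/k^2=4$ common edges. This, not a $K_{2,n-2}$ analysis, is where the budget of four goes.
\item Absorb the outside neighbours $x$ of $B$ by three-cycles $x\mapsto p_x\mapsto q_x\mapsto x$. This requires each core vertex to have linearly many non-neighbours, which is exactly what the almost-universal lemma supplies.
\item Finish the remainder by a lopsided-LLL list packing.
\end{enumerate}

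\textbf{Case (c).} The set of vertices of degree at least $D$ is not bounded; you only know $|H|\le 4n/D$. This case needs no core, though. Once $\Delta(G)\le\varepsilon_0 n$, a uniformly random bijection with the lopsided local lemma gives zero overlap directly.
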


Note that the upper bound is easy. For the lower bound, the proof strategy is to prove the contrapositive: every graph $G$ with at most $2n-3$ edges can be relabelled so that at most four edges survive.  The main difficulty is to handle high-degree vertices in $G$. 
Our first idea is to separate these few high-degree vertices from the rest of the graph and map them into carefully chosen low-degree buffer vertices.  The second idea is to encode every remaining possible common edge as a forbidden image in a list-packing problem and then complete the relabelling by a sparse random-permutation  argument. 

The paper is organized as follows. Section~2 presents the extremal construction and packing tools. Section~3 excludes almost-universal vertices. Section~4 proves the main theorem via the core--buffer--completion method. Section~5 gives concluding remarks.

\section{Preliminaries}\label{Preliminaries}
In this section, we list some known results and introduce the list packing  that will be used in our proof.

Let $G$ be a graph. We denote by $e(G) = |E(G)|$ its number of edges. For a vertex $v \in V(G)$, let $N_G(v)$ denote the \emph{neighbourhood} of $v$ in $G$, and let $d_G(v) = |N_G(v)|$ be its \emph{degree}. Then $v$ is \emph{universal} if $d_G(v)=|V(G)|-1$, and \rev{we call $v$ a $k$-vertex} if $d_G(v)=k$. 
Write $\Delta(G)$ and $\delta(G)$ for the maximum and minimum degrees of $G$, respectively. For disjoint subsets $X, Y \subseteq V(G)$, we define $N_G(X) = \bigcup_{x \in X} N_G(x)$, and for a vertex $v$, write $d_X(v) = |N_G(v) \cap X|$. Let $E_G(X, Y)$ denote the set of edges with one endpoint in $X$ and the other in $Y$, and let $e_G(X,Y)=|E_G(X, Y)|$.  An edge $e\in E_G(X, Y)$ is called a \emph{$X$--$Y$ edge}.  A set $W\subseteq V(G)$ is called an \emph{independent set} if no two vertices of $W$ are adjacent in $G$.
The \emph{independence number} of $G$, denoted by $\alpha(G)$, is the maximum size of an independent set in $G$. When the underlying graph is clear from the context, we shall omit the subscript $G$ for brevity. 

The following standard greedy bound for independent sets will be used several times; see, for example, Diestel~\cite{Diestel}.

\begin{theorem}\label{thm:greedy-alpha}
Every finite graph $G$ satisfies
\[
 \alpha(G)\ge\frac{|V(G)|}{\Delta(G)+1}.
\]
\end{theorem}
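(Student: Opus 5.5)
The plan is to run a greedy argument and check it against the degree bound. We build an independent set $W$ one vertex at a time. While $V(G)$ still has vertices that are neither in $W$ nor adjacent to a vertex of $W$, we pick any such vertex $v$ and add it to $W$. We then delete $v$ together with its neighbourhood $N_G(v)$ from further consideration. The process stops once every vertex has been removed, which happens after finitely many steps because $G$ is finite.

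Two facts need checking, in this order. First, $W$ is independent. A vertex is only ever chosen from the remaining vertices, and every neighbour of a previously chosen vertex was removed when that vertex was chosen. So no two chosen vertices are adjacent.

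Second, we need the counting bound. Each step removes at most $1+d_G(v)\le \Delta(G)+1$ vertices, and all $|V(G)|$ vertices are eventually removed. Hence the number of steps satisfies $|W|(\Delta(G)+1)\ge |V(G)|$, which gives
\[
\alpha(G)\ge |W|\ge \frac{|V(G)|}{\Delta(G)+1}.
\]
The case of the empty graph on zero vertices is trivial.

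There is no real obstacle here. The only point needing care is that the bound on the number of removed vertices uses the degree in $G$, not in the remaining subgraph. This is harmless, since the number removed is at most $1+d_G(v)$ either way.
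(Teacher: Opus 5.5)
Your greedy argument is correct, and it is the standard greedy proof the paper has in mind: the paper gives no proof of its own and simply cites this as the standard greedy bound from Diestel. You handle both the independence of $W$ and the count $|W|(\Delta(G)+1)\ge |V(G)|$ properly.
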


We shall use the following classical value, which Erd\H{o}s~\cite{Erdos} attributes to Chung, Graham, and Murty\rev{.}
\begin{theorem}[Chung, Graham and Murty]\label{CGM-THM-f-N,2}
  $f(N,2)=\left\lfloor\frac{3N}{2}\right\rfloor$\rev{.}  
\end{theorem}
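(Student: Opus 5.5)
The statement immediately above is the Chung--Graham--Murty value $f(N,2)=\lfloor 3N/2\rfloor$, which the paper quotes as a known result. Still, here is how I would prove it. There are two directions: a construction with $\lfloor 3N/2\rfloor$ edges and $\mu\ge 2$, and a lower bound. For the lower bound I show that any graph with at most $\lfloor 3N/2\rfloor-1$ edges has a permutation leaving at most one common edge.

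\textbf{Upper bound.} For even $N$, I would take a 3-regular graph that is far from symmetric, for instance a cubic graph with rigid local structure. For odd $N$, I would use such a graph with one vertex of degree $4$. The claim to verify is that every relabelling preserves at least two edges.

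A natural first candidate is the wheel-like family of Mullin type. Since that family is tuned to $k=5$, I would instead look for graphs in which every vertex has degree at least $3$. The intuition is that degree-$\le 2$ vertices are what make cheap packings possible. I would verify the construction by a case analysis on where $\pi$ sends each vertex, and I expect this verification to be fiddly.

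\textbf{Lower bound.} Suppose $e(G)\le \lfloor 3N/2\rfloor-1$. Then $G$ has a vertex $v$ of degree at most $2$; in the odd case this still holds, or the degree sum forces several such vertices. I would proceed by induction on $N$: delete a low-degree vertex $v$ (or a pair of them), apply the inductive hypothesis to obtain $\pi'$ on $G-v$ with $I\le 1$, and then extend $\pi'$ by placing $v$ back.

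The extension requires that $v$'s at most two edges avoid creating a second common edge. To achieve this, I would choose the target position of $v$, possibly swapping $v$ with another low-degree vertex, so that the images of $v$'s neighbours are not adjacent to the image of $v$. Because $G$ is sparse, there are many low-degree vertices; Theorem~\ref{thm:greedy-alpha} then supplies an independent set of them, and this gives enough freedom to choose the swap.

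\textbf{Main obstacle.} The hardest step will be the base and degenerate cases. These include graphs in which low-degree vertices are clustered, such as long paths hanging off dense cores, and the boundary count where $e(G)$ sits exactly at $\lfloor 3N/2\rfloor-1$. There the inductive step may remove too many edges relative to vertices.

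For these cases I would first try removing a degree-$\le 1$ vertex, which is always safe. For a degree-2 vertex, I would handle adjacent pairs of degree-2 vertices by contracting the pair and re-expanding it with a reversal of orientation.
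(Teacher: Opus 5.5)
The paper does not prove this statement. It only quotes it from Erd\H{o}s and uses it once, in Lemma~\ref{lem:almost}. Judged on its own terms, your proposal has genuine gaps in both directions, and the fix for the first gap shows that the lower bound you set out to prove is false.

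\textbf{Upper bound.} A cubic graph, or a cubic graph with one vertex of degree $4$, cannot work. By Sauer--Spencer (Theorem~\ref{thm:list} with $F$ empty), any $G$ with $\Delta(G)^2<N/2$ has a self-packing, so $\mu(G)=0$. This covers every cubic graph with $N\ge 19$ and every graph of maximum degree $4$ with $N\ge 33$, so no case analysis can verify your construction. Your intuition runs the wrong way: bounded degree is exactly what makes packing easy. What forces common edges is a vertex of linear degree. If $x$ is universal and $\pi(x)=x'\ne x$, then every $G$-edge at $x'$ and every $G_\pi$-edge at $x$ is common, as in the proof of Lemma~\ref{prop:upper}.

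\textbf{Your lower bound is false.} This mechanism already gives graphs with $\lfloor 3N/2\rfloor-1$ edges and $\mu\ge 3$:
\begin{itemize}
\item for odd $N$, join a universal vertex $x$ to a perfect matching on the other $N-1$ vertices;
\item for even $N$, join $x$ to an edge cover of the other $N-1$ vertices by $N/2$ edges.
\end{itemize}
Every vertex other than $x$ then has degree at least $2$. So any $\pi$ with $\pi(x)\ne x$ leaves at least $2+2-1=3$ common edges, and $\pi(x)=x$ leaves $N-1$. The smallest case is the bowtie: $N=5$, $6$ edges, $\mu\ge 3$, whereas $\lfloor 15/2\rfloor=7$. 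So the boundary case $e(G)=\lfloor 3N/2\rfloor-1$ that you flagged is not a technicality; it contains counterexamples. The displayed value overstates $f(N,2)$ for every $N\ge 3$, presumably through a misquotation. The paper's application has $e(F)\le N+(N-6)/14$ and needs only that such sparse graphs have $\mu(F)\le 1$. Any version of the theorem of the form $f(N,2)\ge\frac32N-O(1)$ supplies this.

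\textbf{The induction misses the real obstacle.} Even against a corrected threshold, deleting and re-inserting one low-degree vertex cannot work, because the difficulty lies with the high-degree vertices. By the computation above, if $x$ is universal, then any permutation with at most one common edge must send $x$ to a vertex of degree $1$ and must pull a vertex of degree $1$ onto $x$. The inductive $\pi'$ on $G-v$ may send $x$ to a leaf $\ell$ of $G-v$ with $\ell v\in E(G)$. Then $\ell=\pi(x)$ is universal in $G_\pi$, so $\ell x$ and $\ell v$ are both common wherever $v$ is placed. Swapping $v$ with another low-degree vertex does not change $\pi'(x)$. The same example works when $d(v)=1$, so deleting such a vertex is not ``always safe'' either. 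The inductive hypothesis therefore has to control where the high-degree vertices go. The missing idea is a global first step: route these vertices to, and from, degree-$\le 1$ positions, and only then pack the sparse remainder. That is what the first case of Lemma~\ref{lem:almost} does, and what the core--buffer construction does for $k=5$.
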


For the sake of completeness, we include a proof of the upper bound on $f(n,5)$.

\begin{lemma}\label{prop:upper}
For every $n\ge6$, $f(n,5)\le2n-2$\rev{.}
\end{lemma}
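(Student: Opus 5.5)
We take Mullin's wheel $W_n$ from the introduction and check directly that $\mu(W_n)\ge 5$. Let $h$ be a vertex joined to every vertex of a cycle $C$ on the remaining $n-1\ge5$ vertices. Then $e(W_n)=(n-1)+(n-1)=2n-2$, the hub $h$ is universal, and every cycle vertex has degree exactly $3$: its two cycle neighbours plus $h$. Fix an arbitrary permutation $\pi$ of $V(W_n)$. It suffices to exhibit five edges of $W_n$ that also lie in $(W_n)_\pi$. We split into two cases according to whether $\pi$ fixes the hub.

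\emph{Case 1: $\pi(h)=h$.} Every spoke $hx$ is sent to $h\pi(x)$, which is again a spoke. Hence all $n-1\ge5$ spokes are common edges.

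\emph{Case 2: $\pi(h)=v\neq h$.} Then $v$ is a cycle vertex. In $(W_n)_\pi$ the vertex $v$ is universal, since it is the image of $h$. Therefore the three edges of $W_n$ at $v$, namely $vh$ and the two cycle edges at $v$, are all common edges. Symmetrically, put $u=\pi^{-1}(h)\neq h$. Then $u$ is a cycle vertex of $W_n$, so $h=\pi(u)$ has degree $3$ in $(W_n)_\pi$. Since $h$ is universal in $W_n$, these three edges at $h$ in $(W_n)_\pi$ are common edges. The edge $hv$ belongs to both triples; indeed $uh\in E(W_n)$ maps to $\pi(u)\pi(h)=hv$. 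Apart from $hv$, the two triples consist of edges at $v$ not containing $h$ and edges at $h$ not containing $v$, so they are disjoint. This gives $3+3-1=5$ common edges.

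In both cases $I_{W_n}(\pi)\ge5$, so $\mu(W_n)\ge5$ and $f(n,5)\le 2n-2$. The only delicate point is the overlap count in Case 2. The hypothesis $n\ge6$ guarantees that the cycle has length at least $5$. This makes every cycle vertex have degree exactly $3$ and makes the $n-1$ spokes in Case 1 number at least $5$.
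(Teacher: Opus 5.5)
Your proof is correct and follows the paper's argument essentially verbatim: the same case split on whether $\pi$ fixes the hub, with the $n-1\ge5$ spokes in the first case and the $3+3-1=5$ count of common edges at $\pi(h)$ and $h$ in the second. The only differences are cosmetic: the paper allows any $2$-regular graph on $n-1$ vertices instead of a single cycle, and any rim of length $\ge 3$ already gives each rim vertex degree exactly $3$, so $n\ge6$ is needed only for the spoke count in Case~1.
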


\begin{proof}
Taking a $2$-regular graph $H$ on $n-1$ vertices and joining the remaining vertex $x$ to every vertex of $H$ yields a new graph $G$. It suffices to show that $I_G(\pi) \ge 5$ for every permutation $\pi$ of $V(G)$. Indeed, let $x' := \pi(x)$ be  the universal vertex of $G_\pi$.  If $x = x'$, then the $n-1$ edges incident to $x$ are common to both $G$ and $G_\pi$, giving $I_G(\pi) \ge n-1 \ge 5$ for $n \ge 6$. Now suppose $x \ne x'$. In $G$, the vertex $x'$ has degree three, whereas in $G_\pi$ it is universal; hence all three $G$-edges incident to $x'$ are common. Symmetrically, all three $G_\pi$-edges incident to $x$ are also common in $G$. The edge $xx'$ is counted twice, so the total number of distinct common edges is at least $3 + 3 - 1 = 5$.
\end{proof}

Now we  recall the notion of list packing.  For two graphs $G_1=(V_1,E_1)$ and $G_2=(V_2,E_2)$ with $|V_1|=|V_2|$,  a \emph{packing} of $G_1$ with $G_2$ is a bijection $\phi:V_1\to V_2$ such that $\phi(u)\phi(v)\notin E_2$ for every $uv\in E_1$.  We call $V_1$ the \emph{source} side and $V_2$ the \emph{target} side.  When $G_1=G_2=G$, such a bijection is a \emph{self-packing} of $G$.  A graph triple $(G_1,G_2,F)$ consists of two $n$-vertex graphs $G_i=(V_i,E_i)$ and a bipartite graph $F$ with bipartition $(V_1,V_2)$. We shall call $G_1$ and $G_2$ the white graphs, and $F$ the yellow graph of forbidden pairs.
A bijection $\phi:V_1\to V_2$ is a \emph{list packing} if
\[
 uv\in E_1\Longrightarrow \phi(u)\phi(v)\notin E_2,
 \qquad u\phi(u)\notin E(F)\quad(u\in V_1).
\]
We remark that the edges of $F$ are the forbidden vertex-image pairs.

The following is the list version of the Sauer--Spencer theorem~\cite{SS} due to Gy\H{o}ri, Kostochka, McConvey and Yager~\cite{GKMY}, which is a key ingredient in our proof..

\begin{theorem}[Gy\H{o}ri, Kostochka, McConvey and Yager~\cite{GKMY}]\label{thm:list}
Let $(G_1,G_2,F)$ be \rev{a} graph triple with $G_i=(V_i,E_i)$ for $i=1,2$.  If 
\[
 \Delta(G_1)\Delta(G_2)+\Delta(F)<|V_1|/2,
\]
then $(G_1,G_2,F)$ has a list packing.
\end{theorem}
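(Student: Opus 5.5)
The plan is to adapt the Sauer--Spencer swapping argument to the list setting. Write $n=|V_1|=|V_2|$, $\Delta_i=\Delta(G_i)$, and for a bijection $\phi:V_1\to V_2$ call a \emph{conflict} either an edge $uv\in E_1$ with $\phi(u)\phi(v)\in E_2$, or a vertex $u\in V_1$ with $u\phi(u)\in E(F)$. Choose $\phi$ with the minimum number of conflicts. Suppose this number is positive, and fix a vertex $u$ involved in at least one conflict. I will find $w\ne u$ such that the transposed map $\phi'$, which swaps the images of $u$ and $w$, has strictly fewer conflicts. This contradicts minimality, so the minimiser is a list packing.

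Only conflicts touching $u$ or $w$ can change under the swap. It therefore suffices to choose $w$ so that, under $\phi'$, neither $u$ nor $w$ lies in any conflict. Then every conflict of $\phi$ at $u$, of which there is at least one, disappears and none is created. The requirements on $w$, each with a bound on the number of $w$ it excludes, are:
\begin{itemize}
\item for $x\in N_{G_1}(u)\setminus\{w\}$ we need $\phi(w)\phi(x)\notin E_2$, i.e.\ $\phi(w)\notin N_{G_2}(\phi(N_{G_1}(u)))$; this excludes at most $\Delta_1\Delta_2$ choices;
\item for $y\in N_{G_1}(w)\setminus\{u\}$ we need $\phi(u)\phi(y)\notin E_2$, i.e.\ $w\notin N_{G_1}(\phi^{-1}(N_{G_2}(\phi(u))))$; this excludes at most $\Delta_2\Delta_1$ choices;
\item the list constraints $u\phi(w)\notin E(F)$ and $w\phi(u)\notin E(F)$; these exclude at most $2\Delta(F)$ choices.
\end{itemize}
Since the quantities are integers, the hypothesis gives $2\Delta_1\Delta_2+2\Delta(F)\le n-1$. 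Hence these conditions exclude at most $n-1$ vertices, and some valid $w$ survives among the $n$ vertices of $V_1$.

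The main obstacle is the bookkeeping at the edge $uw$ itself, together with the requirement $w\neq u$. The swap sends $uw$ to the same pair $\phi(u)\phi(w)$, so if $uw$ was a conflict it remains one. The naive fix of excluding all of $N_{G_1}(u)\cup\{u\}$ costs an extra $\Delta_1+1$, which the hypothesis does not allow. I would instead absorb these cases into sets already counted.

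First, $w=u$ is automatically excluded whenever $u$ has an edge-conflict. In that case $u$ is adjacent to some $y$ with $\phi(y)\in N_{G_2}(\phi(u))$, so $u$ lies in the second excluded set. If $u$'s only conflict is the list conflict, then $w=u$ is the trivial swap, which is already excluded by the third condition since $u\phi(u)\in E(F)$.

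Second, suppose $w\in N_{G_1}(u)$ and $uw$ is a conflict. Then $\phi(u)\in N_{G_2}(\phi(w))$, so $w$ is adjacent to a vertex whose image is adjacent to $\phi(u)$. Hence $w$ falls in a set already excluded, with $y=u$ as the witness. I would therefore run the second count over all of $N_{G_1}(w)$ rather than $N_{G_1}(w)\setminus\{u\}$; this keeps the bound $\Delta_1\Delta_2$ while also ruling out surviving conflicts on $uw$.

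Checking carefully that these overlaps keep the total number of excluded $w$ at most $2\Delta_1\Delta_2+2\Delta(F)<n$ is the delicate step. If it fails, I would fall back to choosing $u$ and $w$ from a pair of conflicting vertices and rerunning the count with $u$ and $w$ playing symmetric roles.
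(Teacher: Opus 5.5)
The paper does not prove this statement; it imports it from Gy\H{o}ri--Kostochka--McConvey--Yager, so your argument has to stand on its own. The swap framework and the three counts are right, but the step handling the edge $uw$ is false. Suppose $uw\in E_1$ is a conflict, i.e.\ $\phi(u)\phi(w)\in E_2$. This does \emph{not} place $w$ in your second excluded set $B=N_{G_1}\bigl(\phi^{-1}(N_{G_2}(\phi(u)))\bigr)$, because the witness $y=u$ would need $\phi(u)\in N_{G_2}(\phi(u))$, which is impossible. What the conflict actually shows is that $u$ lies in $B$, not $w$.

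For instance, take $E_1=\{uw\}$, $E_2=\{\phi(u)\phi(w)\}$ and $F$ empty. Both of your excluded sets equal $\{u\}$, so $w$ survives, and swapping $u$ with $w$ leaves the only conflict in place. More fundamentally, the status of $uw$ is invariant under the transposition. So your stated goal (no conflicts at $u$ or $w$ after the swap) is unattainable whenever $uw$ is a conflict. Your fallback of taking $w$ to be a conflicting partner of $u$ is precisely the choice that can never help.

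The repair is to ask for less: it suffices that some conflict at $u$ other than $uw$ disappears and none is created.

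\emph{Case 1: $u$ has an edge conflict $uv$.} Exclude in addition the single vertex $w=v$. This is affordable because $u$ itself lies in both $A=\{w:\phi(w)\in N_{G_2}(\phi(N_{G_1}(u)))\}$ and $B$, with witness $v$ in each. Hence $|A\cup B|\le 2\Delta_1\Delta_2-1$, and the total number of excluded vertices is still at most $2\Delta_1\Delta_2+2\Delta(F)\le n-1$. A surviving $w$ satisfies $w\ne u$ and $w\ne v$. The swap creates no new conflict at $u$ or $w$ other than the unchanged pair $uw$. It destroys $uv$, because $\phi(w)\phi(v)\notin E_2$.

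\emph{Case 2: $u$ has only a list conflict.} No extra exclusion is needed. The vertex $u$ is already excluded by the condition $u\phi(w)\notin E(F)$, and the list conflict at $u$ disappears after the swap.

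In both cases the number of conflicts strictly decreases, which completes the proof under the stated hypothesis.
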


We shall repeatedly use the following elementary interpretation.
Suppose a partial bijection $\psi:S\to T$ has already been prescribed between two copies of a graph $G$, and suppose no edge with both ends in $S$ is sent to an edge with both ends in $T$.
For the remaining source vertices $u$ and target vertices $v$, declare $uv$ forbidden if there is $s\in S$ such that
\[
 su\in E(G),\qquad \psi(s)v\in E(G).
\]
Any list packing of the two remaining white graphs with these forbidden pairs extends $\psi$ to a packing of the two copies of $G$.

We present a list packing result in the setting where the maximum degree of the three graphs is small, yet still linear in the order. To prepare the ground, we first recall the classical Lov\'{a}sz Local Lemma (see, e.g., Alon and Spencer~\cite[Chapter~5]{AS}).

\begin{lemma}[Asymmetric Lopsided Local Lemma]\label{lem:lll}
Let $\mathcal A$ be a finite family of events in a probability space.  Suppose that $\Gamma$ is a negative dependency graph on the vertex set $\mathcal A$, meaning that for every $A\in\mathcal A$ and every set $\mathcal S\subseteq\mathcal A\setminus(\{A\}\cup N_{\Gamma}(A))$, where $N_{\Gamma}(A)$ denotes the neighbours of $A$ in $\Gamma$,
\[
\Pr\left(A\,\middle|\,\bigcap_{B\in\mathcal S}\overline B\right)\le\Pr(A)
\]
whenever the conditional probability is defined.  If there are numbers $x_A\in[0,1)$ such that
\[
\Pr(A)\le x_A\prod_{B\in N_{\Gamma}(A)}(1-x_B)
\]
for every $A\in\mathcal A$, then
\[
\Pr\left(\bigcap_{A\in\mathcal A}\overline A\right)>0.
\]
\end{lemma}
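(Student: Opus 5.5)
This is the classical lopsided (asymmetric) Local Lemma, so I will give the standard Erdős--Lovász induction, with conditional independence replaced by the negative-dependency hypothesis. The central claim I will prove is: for every $A\in\mathcal A$ and every $\mathcal S\subseteq\mathcal A\setminus\{A\}$ with $\Pr\bigl(\bigcap_{B\in\mathcal S}\overline B\bigr)>0$,
\[
\Pr\Bigl(A\,\Big|\,\bigcap_{B\in\mathcal S}\overline B\Bigr)\le x_A .
\]
The argument is by induction on $|\mathcal S|$.

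The base case $\mathcal S=\emptyset$ is immediate, since $\Pr(A)\le x_A\prod(1-x_B)\le x_A$. For the inductive step, split $\mathcal S=\mathcal S_1\cup\mathcal S_2$, where $\mathcal S_1=\mathcal S\cap N_\Gamma(A)$ and $\mathcal S_2=\mathcal S\setminus\mathcal S_1$. Write $C_i=\bigcap_{B\in\mathcal S_i}\overline B$. If $\mathcal S_1=\emptyset$, the negative-dependency hypothesis gives $\Pr(A\mid C_2)\le\Pr(A)\le x_A$ directly. Otherwise, write
\[
\Pr(A\mid C_1\cap C_2)=\frac{\Pr(A\cap C_1\mid C_2)}{\Pr(C_1\mid C_2)} .
\]
The numerator is at most $\Pr(A\mid C_2)\le\Pr(A)\le x_A\prod_{B\in N_\Gamma(A)}(1-x_B)$, again by the hypothesis, because $\mathcal S_2$ avoids $A$ and its neighbours. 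For the denominator, enumerate $\mathcal S_1=\{B_1,\dots,B_r\}$ and use the chain rule:
\[
\Pr(C_1\mid C_2)=\prod_{j=1}^r\Bigl(1-\Pr\bigl(B_j\,\big|\,\overline{B_1}\cap\dots\cap\overline{B_{j-1}}\cap C_2\bigr)\Bigr).
\]
Each conditioning family here has size less than $|\mathcal S|$ and avoids $B_j$, so the induction hypothesis applies and gives $\Pr(\cdot)\le x_{B_j}$. Hence the denominator is at least $\prod_{j}(1-x_{B_j})\ge\prod_{B\in N_\Gamma(A)}(1-x_B)$. Dividing numerator by denominator yields the bound $x_A$.

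The only delicate step is the one that usually causes trouble: checking that every conditional probability is defined. For this I will run the induction jointly with the auxiliary claim $\Pr\bigl(\bigcap_{B\in\mathcal S}\overline B\bigr)\ge\prod_{B\in\mathcal S}(1-x_B)>0$. This auxiliary claim follows from the chain rule together with the main claim applied to strictly smaller families, and positivity uses $x_B<1$. Once both claims are established, enumerate $\mathcal A=\{A_1,\dots,A_m\}$ and apply the chain rule once more:
\[
\Pr\Bigl(\bigcap_{i}\overline{A_i}\Bigr)=\prod_{i=1}^m\Bigl(1-\Pr\bigl(A_i\,\big|\,\overline{A_1}\cap\dots\cap\overline{A_{i-1}}\bigr)\Bigr)\ge\prod_{i=1}^m(1-x_{A_i})>0 .
\]
This is the conclusion of the lemma.
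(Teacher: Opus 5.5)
Your argument is correct: it is the standard Erd\H{o}s--Lov\'asz induction (as in Alon--Spencer, Lemma~5.1.1), with the negative-dependency hypothesis used exactly where mutual independence is normally used, and your positivity bookkeeping is sound (every conditioning event contains $C_1\cap C_2$, and the auxiliary product lower bound justifies the final chain rule). The paper gives no proof of this lemma and simply cites Alon and Spencer, Chapter~5, so your proposal supplies precisely the standard proof being invoked.
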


\rev{The following result} follows from a standard application of Lemma \ref{lem:lll}.

\begin{lemma}\label{lem:sparse}
\rev{There exist an absolute constant $\varepsilon_0>0$ and an integer $N_0>0$ such that the following statement holds.}
Let $(G_1,G_2,F)$ be a graph triple with $|V(G_1)|=|V(G_2)|=n\ge N_0$, 
\[
 e(G_1),e(G_2)\le3n,
\]
and
\[
 \max\{\Delta(G_1),\Delta(G_2),\Delta(F)\}\le\varepsilon_0n.
\]
Then $(G_1,G_2,F)$ has a list packing.
\end{lemma}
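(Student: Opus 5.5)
We plan to take $\phi$ to be a uniformly random bijection $V_1\to V_2$ and apply Lemma~\ref{lem:lll}, using the negative dependency graph for random permutations. The bad events are all \emph{canonical matching events}. For every ordered choice of an edge $uv\in E_1$ and an edge $xy\in E_2$, let $A_{u\to x,v\to y}$ be the event $\{\phi(u)=x,\ \phi(v)=y\}$, so that each edge pair gives two events. For every forbidden pair $ux\in E(F)$ with $u\in V_1$, $x\in V_2$, let $B_{u\to x}$ be the event $\{\phi(u)=x\}$. A bijection avoiding all these events is exactly a list packing of $(G_1,G_2,F)$. Each $A$-event has probability $1/(n(n-1))\le 2/n^2$, and each $B$-event has probability $1/n$.

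The key structural input is the theorem of Lu and Sz\'ekely on uniform random injections. Each event is of the form ``$\phi$ extends a fixed partial matching $M$''. Join two such events when their matchings \emph{conflict}, that is, when they share a source vertex or a target vertex but are not identical there. This gives a negative dependency graph in the sense of Lemma~\ref{lem:lll}. We cite this as the standard application mentioned before the statement. The point is that an event conflicting with nothing in $\mathcal S$ is only made more likely by conditioning on avoiding the events in $\mathcal S$. To be safe, we may instead join events whose matchings merely share a source or target vertex; enlarging the graph is harmless and only increases the neighbourhood counts below.

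Next we bound the weighted neighbourhoods. Consider first an event $B_{u\to x}$. Its neighbours are of the following kinds:
\begin{itemize}
\item $B$-events with source $u$ or target $x$; there are at most $d_F(u)+d_F(x)\le 2\varepsilon_0 n$ of them, each of probability $1/n$;
\item $A$-events whose source edge contains $u$; there are at most $2\,d_{G_1}(u)\,e(G_2)\le 2\varepsilon_0 n\cdot 3n$ of them, each of probability at most $2/n^2$;
\item $A$-events whose target edge contains $x$; the count and probability bound are the same as in the previous item.
\end{itemize}
The total probability mass of the neighbours is therefore at most $2\varepsilon_0+24\varepsilon_0$. An $A$-event involves two source and two target vertices, so the same count at most doubles, giving mass at most $52\varepsilon_0$. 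Here the hypothesis $e(G_i)\le 3n$ is what keeps the number of $A$-events touching a given vertex at $O(\varepsilon_0 n^2)$. The degree bound $\varepsilon_0 n$ is what makes this count linear rather than quadratic in each factor.

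Finally, we set $x_E=2\Pr(E)$ for every event $E$, and choose $\varepsilon_0$ so small that $2\cdot 52\varepsilon_0\le 1/2$ and that all $x_E\le 1/2$; the latter needs $n\ge N_0$, since then $2/n\le 1/2$. Using $1-t\ge e^{-2t}$ for $t\le 1/2$ gives
\[
\prod_{E'\in N(E)}(1-x_{E'})\ge \exp\Bigl(-2\sum_{E'\in N(E)}x_{E'}\Bigr)\ge e^{-1/2}\ge \tfrac12,
\]
so $x_E\prod(1-x_{E'})\ge \Pr(E)$. Lemma~\ref{lem:lll} then yields a bijection avoiding all events, which is the required list packing.

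The main obstacle is the justification that conflict graphs of matching events form a negative dependency graph for the uniform random permutation. I would quote Lu--Sz\'ekely for this step. Everything else is routine counting, but one must check that $F$-events, which have probability $1/n$, do not swamp the sum. They do not, because each vertex lies in at most $\varepsilon_0 n$ forbidden pairs.
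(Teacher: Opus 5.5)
Your proposal is correct and is essentially the paper's proof. Both use a uniform random bijection, ordered edge-pair and forbidden-pair canonical events, and the conflict graph as negative dependency graph. The neighbourhood counts agree (at most $24\varepsilon_0 n^2$ edge events and $4\varepsilon_0 n$ forbidden-pair events around an edge event), as does $x_E=2\Pr(E)$; your explicit citation of Lu--Sz\'ekely is more careful than the paper, which uses the conflict graph without justification.

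There is one small constant slip at the end. The condition $2\cdot 52\varepsilon_0\le\frac12$ gives $\sum_{E'}x_{E'}\le\frac12$, hence only $\exp(-2\sum_{E'}x_{E'})\ge e^{-1}<\frac12$. To fix it, either halve $\varepsilon_0$, or use $\prod(1-x_{E'})\ge 1-\sum x_{E'}\ge\frac12$ as the paper does; with that inequality your stated condition already suffices.
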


\begin{proof}
Set $\varepsilon_0:=10^{-3}$ and $N_0:=10^3$. Let $(G_1,G_2,F)$ be a graph triple with $|V(G_1)|=|V(G_2)|=n\ge N_0$, $e(G_1),e(G_2)\le3n$, and $\max\{\Delta(G_1),\Delta(G_2),\Delta(F)\}\le\varepsilon_0n$\rev{.}
Choose a  bijection $\phi:V(G_1)\to V(G_2)$ randomly and uniformly. Then $\phi$ fails to be a list packing only if it maps some source edge to a target edge, or maps some source vertex to a forbidden target. For each source edge $u_1u_2\in E(G_1)$ \rev{and each} target edge $v_1v_2\in E(G_2)$, define the white bad event $W_{u_1u_2,v_1v_2}$ to be 
\[
 \phi(u_1)=v_1,\qquad \phi(u_2)=v_2.
\]
Clearly, the probability that $W_{u_1u_2,v_1v_2}$ happens \rev{is}
\[
p_W=\frac1{n(n-1)}.
\]
For every forbidden pair $uv\in E(F)$, define the yellow bad event $Y_{uv}$ to be $\phi(u)=v$. Then the probability that $Y_{uv}$ happens is 
\[
 p_F=\frac1n.
\]

Two canonical events are called conflicting when their prescribed partial injections are incompatible. Put
\[
 D=\max\{\Delta(G_1),\Delta(G_2)\},\qquad L=\Delta(F).
\]
Then $D,L\le \varepsilon_0 n$. 
Note that a white bad event $W_{u_1u_2,v_1v_2}$ uses two source vertices $u_1, u_2$, and two target vertices $v_1, v_2$.
Conflicts through $u_1, u_2$ contribute at most $4\Delta(G_1)e(G_2)$ white events.
The analogous target-side count is at most $4\Delta(G_2)e(G_1)$.
Thus $W_{u_1u_2,v_1v_2}$ conflicts with at most
\[
 4\Delta(G_1)e(G_2)+4\Delta(G_2)e(G_1)\le24Dn
\]
white events and at most $4L$ yellow events.
Similarly, a yellow  event $Y_{uv}$ conflicts through $u$ or $v$ with at most
\[
 2\Delta(G_1)e(G_2)+2\Delta(G_2)e(G_1)\le12Dn
\]
white events and with at most $2L$ yellow events.

By Lemma~\ref{lem:lll}, it is enough to find numbers $x_E\in(0,1)$ such that
\begin{equation}\label{EQ:main-equation-LLL}
     \Pr(E)\le x_E\prod_{E'\in N_{\tau} (E)}(1-x_{E'})
\end{equation}
for every bad event $E$. For \rev{a} white bad event $W:=W_{u_1u_2,v_1v_2}$, let 
\[
x_W=2p_W=\frac2{n(n-1)}, 
\]
and for the yellow bad event $Y_{uv}$, let 
\[
x_{Y_{uv}}=2\rev{p_F}=\frac2n. 
\]
Then 
\[
\sum_{E'\in N_{\tau}(W)}\rev{x_{E'}}\le  24Dn\frac2{n(n-1)}+4L\frac2n
 =\frac{48D}{n-1}+\frac{8L}{n}<0.1, 
\]
and 
\[
\sum_{E'\in N_{\tau}(F_{uv})}\rev{x_{E'}}\le   12Dn\frac2{n(n-1)}+2L\frac2n
 =\frac{24D}{n-1}+\frac{4L}{n}<0.1.
\]
\rev{By the choice of $\varepsilon_0$ and $N_0$, both sums are less than $0.1$. Together with the inequality $\prod_i(1-x_i)\ge1-\sum_i x_i$, this shows that \eqref{EQ:main-equation-LLL} holds and completes the proof.}
\end{proof}

\section{Almost-universal vertices are impossible}

In this section, we show that a counterexample to Theorem \ref{thm:main} cannot have an almost-universal vertex. The main result is as follows.

\begin{lemma}\label{lem:almost}
Let $G$ be an $n$-vertex graph with $e(G)\le2n-3$.
Let $v\in V(G)$ have exactly $t$ non-neighbours.
If
\[
 n\ge14t+8,
\]
then $\mu(G)\le4$.
\end{lemma}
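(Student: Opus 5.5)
Let $v$ have $d(v)=n-1-t$ and non-neighbourhood $T$ with $|T|=t$. Put $H=G-v$, on $n-1$ vertices. Then
\[
e(H)\le 2n-3-(n-1-t)=n-2+t.
\]
So $H$ is very sparse: its average degree is about $2$ once $n\gg t$. We will construct $\pi$ with $\pi(v)=w$, where $w$ is a vertex of small degree. The edges through $v$ and $w$ then account for a small number of common edges, and the rest is relabelled with no further overlap.

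\emph{Choice of $w$.} Since $\sum_{x\ne v} d_G(x)\le 2(2n-3)-(n-1-t)=3n-5+t$, averaging shows many vertices of $G$ have degree at most $3$ (at least $n-1-(3n-5+t)/4$ of them, which is linear in $n$). We also want $w$ to be a neighbour of $v$, because then $vw$ is automatically common and we must budget for it. If $v$ has a neighbour of degree $1$ or $2$, take it as $w$. Otherwise pick a $3$-vertex $w\in N(v)$; such a vertex exists under $n\ge 14t+8$ by the degree count. Then set $\pi(v)=w$ and $\pi(w)=v$.

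The common edges incident to $\{v,w\}$ are then:
\begin{itemize}
\item the edge $vw$ itself;
\item the edges $vx$ with $x\in N(w)$ and $\pi(x)\in N(v)$;
\item symmetrically, the edges $wy$ with $\pi^{-1}(y)\in N(v)$ and $y\in N(w)$.
\end{itemize}
To control this, we let $\pi$ map $N(w)\setminus\{v\}$ (at most $2$ vertices) into $T$ wherever possible, and map $T$ into $N(w)\setminus\{v\}$ or into vertices we choose. Then $G$-edges at $w$ land on non-edges at $v$. If $t$ is too small for this (for example $t\le 1$), we accept up to $3$ common edges at $v$ and $w$ besides $vw$, and so we must keep the total at most $4$. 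This bookkeeping works because $d(w)\le3$, giving at most $1+2+2-\text{overlaps}$ common edges; the case analysis on $t\in\{0,1,2\}$ and the degree of $w$ must squeeze this to at most $4$. When $t=0$, $v$ is universal, so $e(H)\le n-2$. Then $H$ is a forest or has a component structure we can exploit, and $w$ can be taken to be a leaf or isolated vertex of $H$, which gives degree at most $2$ in $G$.

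\emph{Completion.} After fixing $v,w$ and the few vertices of $N(w)\cup T$ (a bounded-by-$O(t)$ set $S$), we must extend $\pi$ on the remaining $m\approx n-O(t)$ vertices. No edge of $H-S$ may go to an edge of $H-S$, and no forbidden pair may be created by $S$ (in the sense of the elementary interpretation after Theorem~\ref{thm:list}). Every remaining vertex is adjacent to $v$ and its image is adjacent to $v$ or $w$, but those pairs were handled already, so what matters are edges inside $H$. Here $H-S$ has about $n+t$ edges but possibly large maximum degree, so Lemma~\ref{lem:sparse} does not apply directly. Instead we use the following fact: a graph with at most $m+O(t)$ edges on $m$ vertices self-packs, apart from small exceptions. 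We place high-degree vertices of $H$ onto isolated or low-degree vertices; sparsity gives many such vertices. We then finish with Theorem~\ref{thm:list} or Lemma~\ref{lem:sparse} on the bounded-degree remainder, where forbidden lists have size $O(t)$.

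The main obstacle I expect is this completion step when $H$ itself has vertices of linear degree, for example $H$ a star plus a matching. Packing two copies of such an $H$ while also respecting the $O(t)$ forbidden pairs requires mapping the big vertices of one copy to vertices that are isolated in $H$. There are at least about $n-2e(H)/\cdot$ such vertices only when degrees are skewed, so the argument needs a dichotomy. If $\Delta(H)\le\varepsilon_0 n$, apply Lemma~\ref{lem:sparse} directly. Otherwise, the few big vertices force many degree-$1$ vertices in $H$, which serve as buffer targets. The hypothesis $n\ge14t+8$ is what I would use to guarantee enough room in these counts.
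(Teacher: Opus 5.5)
\paragraph{There is a genuine gap in the completion step.} You flagged this step yourself as the main obstacle, and your proposed dichotomy does not close it. The ``fact'' you invoke, that a graph with $m+O(t)$ edges on $m$ vertices self-packs apart from small exceptions, is false: no graph with a universal vertex self-packs, a star being the simplest case. Your framework also insists on \emph{zero} overlap once $v$, $w$ and $N(w)\cup T$ are fixed, and this can be impossible.

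Take $G=K_{1,1,n-2}$: $v$ and $z$ are adjacent to each other and to all of $x_1,\dots,x_{n-2}$, so $e(G)=2n-3$ and $t=0$. Your rule forces $w=x_1$. For every $\pi$ swapping $v$ and $w$, the edges $vw$, $wz$ (the image of $v\pi^{-1}(z)$) and $v\pi(z)$ (the image of $wz$) are all common. Moreover:
\begin{itemize}
\item if $\pi(z)=z$, all $n-3$ edges $zx_i$ with $i\ge2$ survive;
\item if $\pi(z)=x_j$, the edge $z\pi^{-1}(z)$ survives as $x_jz$.
\end{itemize}
In your forbidden-pair language, every remaining source vertex is forbidden from the target $z$, so the list packing you ask for does not exist. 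The lemma holds here only because the fourth common edge is spent inside the remainder. Your plan has no tool that produces a relabelling of the remainder with exactly one common edge.

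Separately, your branch ``$\Delta(H)\le\varepsilon_0 n$, apply Lemma~\ref{lem:sparse}'' is not available. The statement must hold for every $n\ge14t+8$, including $n<N_0$ and $t\approx n/14$. In that range $\Delta(H)$ and the $\Theta(t)$ forbidden degrees far exceed $10^{-3}n$.

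\paragraph{The missing idea is to split on $\delta(H)$ rather than on $\Delta(H)$.}

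\emph{Case $d_H(u)=r\le1$ for some $u$.} Swap $u$ and $v$. Whatever relabelling is used on $F=G-\{u,v\}$, the edges meeting $\{u,v\}$ contribute at most $1+2r\le3$ common edges, so no forbidden pairs are needed at all. Also $e(F)\le N+t-r<\lfloor3N/2\rfloor$ with $N=n-2\ge14t+6$. Theorem~\ref{CGM-THM-f-N,2} therefore gives a relabelling of $F$ with at most one common edge. This absorbs stars and every other non-self-packable remainder.

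\emph{Case $\delta(H)\ge2$.} Then $\sum_x(d_H(x)-2)=2e(H)-2(n-1)\le2t-2$. Hence $\Delta(H)\le2t$, and at most $2t-2$ vertices (the set $S$) have $H$-degree at least $3$. So the linear-degree vertices you fear cannot occur here. The paper then proceeds as follows:
\begin{itemize}
\item pick $u\notin T\cup S\cup N_H(S)$ with $N_H(u)=\{y_1,y_2\}$;
\item send $S\cup\{w_0\}$, with $w_0\in T$, onto an independent set, with $w_0\mapsto y_1$;
\item send an independent set avoiding $S\cup\{w_0\}$ and its neighbourhood onto $S$.
\end{itemize}
What remains are white graphs of maximum degree $2$ with forbidden degree at most $4t$. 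Theorem~\ref{thm:list} applies at every size, since $4+4t<n_{\mathrm{rem}}/2$. The swap costs $1+2+1=4$. The last term is $1$ because $\pi^{-1}(y_1)=w_0\notin N(v)$. That is exactly your idea of routing $T$ onto $N(w)\setminus\{v\}$, so that part of your plan is sound.
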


\begin{proof}
Let $G$ be an $n$-vertex graph with $e(G) \le 2n-3$, and let $v $ be a vertex in $V(G)$ with  $d_G(v) = n-1-t$. Set $ H=G-v$. Then 
\begin{equation}\label{eq:eH-bound}
 e(H)\le (2n-3)-(n-1-t)=n-2+t.
\end{equation}

Suppose first that $H$ has a vertex $u$ with $d_H(u)=r\le 1$. 
Let
\[
 F=G-\{u,v\},\qquad N=n-2.
\]
Note that the number of edges of $G$ incident with at least one of $u,v$ is $n-1-t+r$. Thus, 
\[
 e(F)\le (2n-3)-(n-1-t+r)=n-2+t-r=N+t-r.
\]
Since $N=n-2\ge14t+6$, we have 
\[
 e(F)\le N+t-r\le N+\frac{N-6}{14}
 <\left\lfloor\frac{3N}{2}\right\rfloor.
\]
By Theorem \ref{CGM-THM-f-N,2}, $F$ has a relabelling with at most one common edge.
Extend this relabelling to $G$ by swapping $u$ and $v$. 
Outside $F$, the edge $uv$ (if it exists) contributes at most one common edge.
\rev{The $r$ edges from $u$ to $F$ can contribute at most $r$ common edges in each direction, so they contribute at most $2r$ further common edges.}
Thus at most $1+2r\le3$ common edges meet $\{u,v\}$, and hence the total number of common edges for $G$ is at most $4$.

In the following, suppose that $\delta(H)\ge2$. 
Let 
\[
 S=\{x\in V(H):d_H(x)\ge3\},\qquad s=|S|.
\]
It follows from $\delta(H)\ge2$ and  \eqref{eq:eH-bound} that 
\[
0\le  \sum_{x\in V(H)}(d_H(x)-2)\le2t-2.
\]
Consequently, 
\begin{equation}\label{eq:S-bounds}
 s\le2t-2,\qquad
 \sum_{x\in S}d_H(x)\le6t-6,\qquad
 \Delta(H)\le2t, 
\end{equation}
where the second bound follows as 
\[
 \sum_{x\in S}d_H(x)=2s+\sum_{x\in S}(d_H(x)-2)
 \le2(2t-2)+(2t-2)=6t-6.
\]

Let $T$ be the set of the $t$ non-neighbours of $v$. Then 
\[
 |T\cup S\cup N_H(S)|\le t+(2t-2)+(6t-6)=9t-8<n-1. 
\]
Choose 
\[
 u\in V(H)\setminus\bigl(T\cup S\cup N_H(S)\bigr).
\]
Then $uv\in E(G)$ and  $d_H(u)=2$. 
Write $N_H(u)=\{y_1,y_2\}$\rev{.}
It follows from  $u\notin N_H(S)$ that \rev{neither $y_1$ nor $y_2$ lies in $S$}.

Fix a vertex $w_0 \in T$, and set $F = G - \{u,v\}$ and $S_0 = S \cup \{w_0\}$. We claim that there exists an independent set $T_0 \subseteq V(F) \setminus S$ such that
\[
|T_0| = |S_0|, \qquad y_1 \in T_0.
\]
Indeed, the graph $F - S$ has maximum degree at most two, and $|V(F) \setminus S| \ge n - 2t$. Delete $y_1$ together with its neighbourhood in $F - S$. At most three vertices are removed, and the remaining graph still has maximum degree at most two. By Theorem~\ref{thm:greedy-alpha}, the graph $F - S$ contains an independent set containing $y_1$ of size at least
\[
1 + \frac{n - 2t - 3}{3} \ge 2t - 1 \ge |S_0|.
\]
The last inequality follows from $|S_0| \le s + 1 \le 2t - 1$, as given in \eqref{eq:S-bounds}. Choose a bijection $\psi_0:S_0\to T_0$ with $\psi_0(w_0)=y_1$.

Next consider the graph $F_0=F-\bigl(S_0\cup N_F(S_0)\bigr)$. If $w_0\notin S$, then $d_F(w_0)\le2$; if $w_0\in S$, its neighbourhood is already included in $N_F(S)$.
Hence, by \eqref{eq:S-bounds},  in either case, we have 
\[
 |S_0\cup N_F(S_0)|\le |S_0|+\sum_{x\in S}d_F(x)+2
 \le (s+1)+(6t-6)+2
 \le8t-5.
\]
This implies that
\[
 |V(F_0)|\ge(n-2)-(8t-5)=n-8t+3.
\]
Applying Theorem~\ref{thm:greedy-alpha} to $F_0$ with $\Delta(F_0)\le 2$ \rev{gives}
\[
 \alpha(F_0)\ge\frac{|V(F_0)|}{3}
 \ge\frac{n-8t+3}{3}>2t-2\ge s,
\]
where the strict inequality follows from $n\ge14t+8$.
Thus $F_0$ contains an independent set $R$ of size $s$.
Fix an arbitrary bijection $\psi_1:R\to S$ and combine \rev{$\psi_0$ and $\psi_1$} into a partial bijection $\psi$.  Observe that $\psi$ creates no common edge.
Edges inside $S_0$ are sent into the independent set $T_0$; there are no edges inside $R$; and there are no edges between $S_0$ and $R$ because $R\subseteq F_0$.
Thus every source edge with both endpoints already prescribed either has both images in an independent set, or has one endpoint in $S_0$ and the other in a set deliberately chosen outside $N_F(S_0)$.

We now apply Theorem~\ref{thm:list} to the unassigned vertices.
Let
\[
 U_1=V(F)\setminus(S_0\cup R),\qquad
 U_2=V(F)\setminus(T_0\cup S). 
\]
Then $U_1$ is the remaining source side and $U_2$ is the remaining target side.
Put $n_{\mathrm{rem}}=|U_1|=|U_2|$.
Define the graph triple
\[
 G_1=F[U_1],\qquad G_2=F[U_2],
\]
and let $G_3$ be the bipartite graph with parts $U_1$ and $U_2$ in which a pair $(x,y)\in U_1\times U_2$ is an edge if and only if there is a prescribed source vertex $z\in S_0\cup R$ such that
\[
 zx\in E(F),\qquad \psi(z)y\in E(F).
\]
Thus $G_1$ and $G_2$ are the two white graphs in Theorem~\ref{thm:list}, and $G_3$ is the yellow graph recording exactly the forbidden images forced by edges crossing from the prescribed source set to the remaining source vertices.

We verify the maximum-degree hypothesis of Theorem~\ref{thm:list}. Recall that $S$ has been removed from both the source and target copies. This implies 
\begin{equation}\label{EQ:max-degree-G1-G2}
  \Delta(G_1)\le2,\qquad \Delta(G_2)\le2.   
\end{equation}

Next we claim that
\begin{equation}\label{EQ:max-degree-G3}
 \Delta(G_3)\le4t.  
\end{equation}
Indeed, for a source vertex $x\in U_1$, a forbidden target $y$ can only be produced by a prescribed neighbour $z\in N_F(x)\cap(S_0\cup R)$.
Since $x\notin S$, there are at most two such prescribed neighbours $z$.
For each one of them, the possible forbidden targets $y$ lie in $N_F(\psi(z))$, and $d_F(\psi(z))\le2t$ by \eqref{eq:S-bounds}.
Hence the source-side degree of $x$ in $G_3$ is at most $4t$.
The target-side estimate is the same.

Since $|S_0|\le s+1$ and $s\le2t-2$,
\begin{equation}\label{EQ:lower-bound-n-rem}
 n_{\mathrm{rem}}=(n-2)-(|S_0|+s)
 \ge(n-2)-(2s+1)
 \ge n-4t+1
 \ge10t+9.
\end{equation}
Combining \eqref{EQ:max-degree-G1-G2}, \eqref{EQ:max-degree-G3} and \eqref{EQ:lower-bound-n-rem}, \rev{we have} 
\[
 \Delta(G_1)\Delta(G_2)+\Delta(G_3)
 \le 2\cdot2+4t<\frac{n_{\mathrm{rem}}}{2}.
\]
Applying Theorem~\ref{thm:list} gives a list packing $\phi:U_1\to U_2$ of the triple $(G_1,G_2,G_3)$. 
Combining $\phi$ with the prescribed bijection $\psi$ gives a bijection $\pi:V(F)\to V(F)$.
By the definition of $G_3$ and the extension principle from Section~\ref{Preliminaries}, we have 
\[
 E(F)\cap E(F_{\pi})=\emptyset,
 \qquad \pi(w_0)=y_1.
\]

\rev{Finally,} define a permutation of $V(G)$ by swapping $u$ and $v$ and using $\pi$ on $F$.
For the number of common edges, note that $uv$ contributes one.
The two edges from $u$ into $H$ contribute at most two.
These are the possible common edges coming from source edges incident with $u$: after the swap they become edges incident with $v$, and there are only the two such source edges from $u$ into $H$.
In the reverse direction, an edge containing $v$ can become \rev{an} edge containing $u$ only through a preimage of $y_1$ or $y_2$; the preimage of $y_1$ is $w_0$, and $vw_0\notin E(G)$.
Indeed, after the swap the only neighbours of $u$ outside $\{v\}$ in the target position are $y_1,y_2$, so a source edge incident with $v$ can survive as an edge incident with $u$ only if its other endpoint is mapped to one of these two vertices.
Thus this direction contributes at most one additional edge.
Therefore there are at most $4$ common edges. This completes the proof of Lemma \ref{lem:almost}. 
\end{proof}

\begin{corollary}\label{cor:nonneighbors}
For all sufficiently large $n$, if $G$ is an $n$-vertex graph with $e(G)\le2n-3$ and $\mu(G)\ge5$, then every vertex has at least $n/15$ non-neighbours.
\end{corollary}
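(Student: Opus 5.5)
The corollary is the contrapositive of Lemma~\ref{lem:almost}, with the threshold $n/15$ coming from solving $n\ge 14t+8$ for $t$. The plan is to suppose a vertex has fewer than $n/15$ non-neighbours and show that the lemma's hypothesis is then satisfied, which forces $\mu(G)\le4$.

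Let $G$ be an $n$-vertex graph with $e(G)\le 2n-3$ and $\mu(G)\ge5$. Suppose, for a contradiction, that some vertex $v$ has exactly $t$ non-neighbours with $t<n/15$. We check that $n\ge 14t+8$. Since $t<n/15$, we have
\[
14t+8<\frac{14n}{15}+8\le n
\]
as soon as $n\ge 120$. So for all $n\ge120$ the hypotheses of Lemma~\ref{lem:almost} hold for $v$, and the lemma gives $\mu(G)\le4$, contradicting $\mu(G)\ge5$. Hence every vertex has at least $n/15$ non-neighbours. Note that the case $t=0$, a universal vertex, is covered as well, since then $n\ge 8$ suffices.

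There is no real obstacle here. The only point needing care is that Lemma~\ref{lem:almost} is stated for \emph{exactly} $t$ non-neighbours, with no lower bound on $t$ such as $t\ge1$. Its proof does fix a vertex $w_0\in T$, which needs $T\neq\emptyset$. So if one worries about $t=0$, it can be handled directly: $G-v$ then has at most $n-2$ edges and so has a vertex of degree at most $1$, and that falls into the first case of the lemma's proof, which never uses $w_0$.

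That first case relies on Theorem~\ref{CGM-THM-f-N,2}, which needs $e(F)<\lfloor 3N/2\rfloor$. For $t=0$ this holds trivially for large $n$. Choosing $n_0=120$ therefore suffices.
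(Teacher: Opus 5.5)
Your proposal is correct and is exactly the paper's route: the paper states the corollary without proof as the immediate contrapositive of Lemma~\ref{lem:almost}, since $t<n/15$ and $n\ge120$ give $14t+8<n$. Your remark about $t=0$ is sound extra care that the paper omits: then $e(G-v)\le n-2$ forces $\delta(G-v)\le1$, so only the first case of the lemma's proof is used, and that case never invokes $w_0$.
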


\section{Proof of Theorem~\ref{thm:main}}

\rev{In this section, we prove the lower bound on $f(n,5)$ by contradiction.}

\begin{theorem}\label{thm:lower}
For all sufficiently large $n$,
\[
 f(n,5)\ge2n-2.
\]
\end{theorem}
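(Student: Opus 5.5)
We argue by contradiction. Suppose $n$ is large and $G$ is an $n$-vertex graph with $e(G)\le 2n-3$ and $\mu(G)\ge5$. We will build a permutation $\pi$ with $I_G(\pi)\le4$. Adding edges only increases $\mu$, so we may assume $e(G)=2n-3$. By Corollary~\ref{cor:nonneighbors}, every vertex has at least $n/15$ non-neighbours, so $\Delta(G)\le (14/15)n$. Degrees can still be linear, however, so Lemma~\ref{lem:sparse} does not apply directly.

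\textbf{Core.} Fix a small constant $\varepsilon\ll\varepsilon_0$. Let the core $B$ be the set of vertices of degree more than $\varepsilon n$. Since $e(G)\le 2n$, we have $|B|\le 4/\varepsilon$, so $B$ has bounded size. Also, at least $n/2$ vertices have degree at most $4$, since the average degree is below $4$.

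\textbf{Buffer.} Next we choose a buffer of low-degree targets for the core. Among the vertices of degree at most $4$ that lie outside $B\cup N(B)$, we would like an independent set $T_B$ with $|T_B|=|B|$. We want $T_B$ to have no neighbours in $B$, and its neighbourhood to avoid the images we plan to use. The difficulty is that $N(B)$ may be almost all of $V$. For example, two hubs of degree about $n/2$ each already cover everything.

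So we do not insist on avoiding $N(B)$. Instead we allow a bounded number of overlaps at the core, which is how the budget of four common edges is spent. We map the core vertices $b\in B$ into low-degree positions $\pi(b)$, and then for each remaining vertex $x$ we forbid every target $y$ with $b x\in E$ and $\pi(b)y\in E$. Each $\pi(b)$ has degree at most $4$, so each $b$ forbids at most $d(b)\cdot 4$ pairs. For a source vertex $x$, the forbidden targets come from its core neighbours, and there are at most $|B|\cdot 4=O(1)$ of them.

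The reverse direction is the real issue. A target $y$ adjacent to a hub position is forbidden for many sources. We also need the targets that are images of low-degree vertices in $G_\pi$ to avoid creating common edges with hub neighbourhoods. Concretely, a source $x$ and target $y$ are forbidden if some $b$ has $bx\in E$ and $\pi(b)y\in E$, and symmetrically the hub positions in the target must not receive neighbours of the core. On the target side, a target $y$ is forbidden for all $x\in N(b)$ whenever $y\in N(\pi(b))$. This forbids at most $4|B|$ targets but up to $\varepsilon n$ sources each, so $\Delta(F)\le \max(4|B|,\ d(y)\text{-type bounds})$.

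Hub targets are also a problem. A vertex $y\in B$ must be the image of some source $x$. That source must be a low-degree vertex whose neighbours map outside $N(y)$, a set of size up to $(14/15)n$. This is where Corollary~\ref{cor:nonneighbors} enters: $y$ has at least $n/15$ non-neighbours, which leaves room for this.

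\textbf{Completion and main obstacle.} We choose sources $R$ of low degree for the targets $B$ and prescribe $\psi$ on $B\cup R$ with no common edge inside. Vertices adjacent to $B$ on the source side, or mapped near $B$ on the target side, receive list constraints. The hard step is to keep the yellow degree $\Delta(F)$ at most $\varepsilon_0 n$. The danger is that a source $x$ adjacent to hub $b$ cannot be sent into $N(\pi(b))$, and symmetrically that $x$ adjacent to $\psi^{-1}(y)$ for a hub target $y$ must avoid $N(y)$, which is linear.

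To handle this, we prescribe $\psi$ also on $N(R)$, which has bounded size since $R$ consists of low-degree vertices. We map $N(R)$ into the non-neighbourhoods of the hubs, using the $n/15$ non-neighbours guaranteed by Corollary~\ref{cor:nonneighbors}. The remaining forbidden lists then have size $O(1)$. After this, Lemma~\ref{lem:sparse} applies to $G-B$ versus $G-B$ with these lists, since $e\le 3n$ and all degrees are at most $\varepsilon n$, and it completes the bijection.

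Degenerate configurations use up the allowance of at most four common edges, in the same way the swap arguments in Lemma~\ref{lem:almost} do. Examples are an edge between two hubs that is forced to survive, or a near-wheel structure. We expect the case analysis when $|B|\ge2$, with hubs of very large degree, to be the main obstacle.
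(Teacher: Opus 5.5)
Your outline has the right general shape (a bounded set of high-degree vertices moved into low-degree positions, non-neighbourhoods from Corollary~\ref{cor:nonneighbors} used to place the vertices around them, Lemma~\ref{lem:sparse} to finish). However, the step that carries the theorem is missing.

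You prescribe $\psi$ on the hubs and their partners $R$ ``with no common edge inside'', and leave the budget of four to unspecified degenerate cases. That cannot work. Take hubs $h_1,h_2,h_3$ and, for each pair $\{i,j\}$, $m\ge5$ vertices of degree two adjacent exactly to $h_i,h_j$. Then $n=3m+3$, $e=2n-6$, and every vertex has at least $m$ non-neighbours, so everything you use holds. Yet every permutation has at least three common edges:
\begin{itemize}
\item A hub sent to a hub leaves at least $2m-(m+2)=m-2$ common edges.
\item Otherwise $\pi(h_i)=y_i$ and $\pi(r_a)=h_a$ with all $y_i,r_a$ of degree two. There are six pairs $(i,a)$ with $h_a\in N(y_i)$, and each $r_a$ misses only one hub, so at least three of the edges $h_ir_a$ survive.
\end{itemize}
This configuration is generic, not degenerate. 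So you need a quantitative way to spend exactly four, and this is where $e(G)\le2n-3$ must enter sharply.

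In the paper the core is $C$ and the buffer is $B$ (note the reversed letter). The deficit $\sum_v(4-d(v))\ge6$ gives linearly many vertices of degree at most $3$ with at most $2$ core neighbours (Lemma~\ref{lem:reservoir}). From these one picks an independent buffer $B$ of size $k=|C|$, no two members sharing a neighbour outside $C$. The \emph{same} set $B$ then serves as both images and preimages of the core, via independent random bijections $\alpha:C\to B$ and $\beta:B\to C$. Since $q=e(C,B)\le2k$, the expected number of surviving $C$--$B$ edges is $q^2/k^2\le4$.

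The completion step is not under control either. A target $y\in N(\pi(b))$ is forbidden for every source in $N(b)\cap U$. That set has $d(b)-O(1)$ elements, more than $\varepsilon n$ and possibly close to $\tfrac{14}{15}n$, not ``up to $\varepsilon n$''. Prescribing $\psi$ on $N(R)$ only repairs the opposite direction. You would also have to prescribe the preimages of $N(\pi(b))$ among the non-neighbours of $b$.

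The paper handles both directions at once. Each $x\in X=N(B)\setminus C$ has a unique buffer neighbour $b(x)$. The three-cycles $x\mapsto p_x\mapsto q_x\mapsto x$ are chosen with $p_x\notin N(\beta(b(x)))$, $q_x\notin N(\alpha^{-1}(b(x)))$ and $d(p_x),d(q_x)\le D_0$. This makes the core contribute no forbidden pairs on $U$ at all.

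Your claim that the remaining lists have size $O(1)$ is also false. A target adjacent to the image of a prescribed $s\in N(R)$ is forbidden for all of $N(s)\cap U$, up to $\varepsilon n$ sources. With a fixed threshold the core can have nearly $4/\varepsilon$ vertices, so the naive bound on $\Delta(F)$ is of order $n$. The paper instead chooses the core size $k$ from the limiting degree profile so that $k\,d_{k+1}\le k\eta n$ is tiny compared with both $\varepsilon_0n$ and $an$. This degree gap is what makes the greedy choice of $B$ succeed and gives the bound $\Delta(F_U)\le3kD+6kD_0$.
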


Note that combining Theorem~\ref{thm:lower} with \rev{Lemma}~\ref{prop:upper} gives Theorem~\ref{thm:main}.\\

We prove Theorem~\ref{thm:lower} \rev{using the following strategy: from} an infinite sequence of counterexamples we extract a limiting degree profile, choose a fixed finite set $C$ of high-degree core vertices, find low-degree buffer vertices into which the core can be moved, and then absorb all neighbours of those buffers into another bounded set.

Suppose for contradiction that there is an infinite sequence of counterexamples $G_n$ \rev{(after passing to a subsequence, we continue to index the graphs by their orders $n$)} satisfying
\begin{equation}\label{eq:counterexamples}
 e(G_n)\le2n-3,\qquad \mu(G_n)\ge5.
\end{equation}
Write the degree sequence of $G_n$ in non-increasing order as
\[
 d_{n,1}\ge d_{n,2}\ge\cdots\ge d_{n,n}.
\]
By a diagonal subsequence argument we may assume that, for every fixed $i$,
\[
 \frac{d_{n,i}}n\longrightarrow\alpha_i.
\]

The sequence $(\alpha_i)$ is non-increasing and non-negative.
For every fixed $r$, by \eqref{eq:counterexamples}, 
\[
 \sum_{i=1}^r\alpha_i
 =\lim_{n\to\infty}\frac1n\sum_{i=1}^r d_{n,i}
 \le\limsup_{n\to\infty}\frac{4n-6}{n}=4.
\]
Letting $r\to\infty$ gives
\begin{equation*}
 \sum_{i\ge1}\alpha_i\le4.
\end{equation*}
This implies that 
\begin{equation}\label{eq:alpha-tail}
 \alpha_i\to0,\qquad k\alpha_{k+1}\to0.
\end{equation}

Fix $D_0=10^3$ and let $\varepsilon_0\le 10^{-3}$ be the constant in Lemma~\ref{lem:sparse}. We have $\Delta(G_n)\ge \varepsilon_0 n$ \rev{for} all sufficiently large $n$, since otherwise, applying Lemma~\ref{lem:sparse} to two copies of $G_n$ with no forbidden pairs gives \rev{a} self-packing for all sufficiently large $n$, contradicting \eqref{eq:counterexamples}.
Hence, we can set 
\begin{equation*}
 a:=\alpha_1>0.
\end{equation*}
Put
\[
 \gamma:=\frac1{15}-\frac4{D_0+1}>0.
\]
By \eqref{eq:alpha-tail}, choose a fixed $k\ge 1$ so  that
\begin{equation*}
 k\alpha_{k+1}<\min\left\{\frac a{100},\frac\gamma{100},\frac{\varepsilon_0}{100}\right\}.
\end{equation*}
We may further fix a constant $\eta>\alpha_{k+1}$ such that
\begin{equation}\label{eq:eta-choice}
 \eta<\frac{\varepsilon_0}{10},\qquad
 k\eta<\min\left\{\frac a{100},\frac\gamma{100},\frac{\varepsilon_0}{100}\right\}.
\end{equation}

Let $C$ be the set of the $k$ \rev{highest-degree} vertices of $G_n$.  We call $C$ the \emph{core} and its elements the \emph{core vertices}. Set
\[
 D=d_{n,k+1},\qquad S_C=\sum_{c\in C}d(c). 
\]
\rev{Then,} letting $n\to\infty$ gives
\begin{equation}\label{eq:core-limits}
 \frac Dn\longrightarrow\alpha_{k+1},\qquad
 \frac{S_C}{n}\longrightarrow\sum_{i=1}^k\alpha_i\ge a.
\end{equation}
In particular, after increasing the lower bound on $n$ if necessary, we may assume throughout the remainder of the proof that
\begin{equation}\label{eq:D-bound}
 D\le\eta n.
\end{equation}

Define the \emph{reservoir}
\[
 \mathcal B=\{v\notin C:d(v)\le3,\ d(v,C)\le2\}.
\]

\begin{lemma}\label{lem:reservoir}
For every $n$,
\[
 |\mathcal B|\ge\frac{S_C}{6}-2k.
\]
\end{lemma}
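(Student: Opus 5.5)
We want $|\mathcal B|\ge S_C/6-2k$, and we will get it by double counting the edges leaving the core $C$.

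First count the edges leaving $C$. Every core vertex has at most $k-1$ neighbours inside $C$, so
\[
 e(C,V\setminus C)\ge S_C-k(k-1).
\]
Now split $V\setminus C$ into three classes:
\begin{itemize}
 \item $A_1$, the vertices of degree at most $3$ with at most $2$ neighbours in $C$; this is exactly $\mathcal B$;
 \item $A_2$, the vertices of degree at most $3$ with $d(v,C)=3$;
 \item $A_3$, the vertices of degree at least $4$.
\end{itemize}
Then
\[
 e(C,V\setminus C)\le 2|\mathcal B|+3|A_2|+\sum_{v\in A_3}d(v,C).
\]

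Next use the global edge bound $e(G)\le 2n-3$, i.e. $\sum_v (d(v)-4)\le -6$. Vertices in $C$ and in $A_3$ contribute nonnegative excess, while vertices of degree at most $3$ contribute at most $-1$ each. To make this usable, weight each outside vertex by $d(v,C)$ and compare with $d(v)-4$. Since
\[
 \sum_{v\notin C}(4-d(v))\ge 6+\sum_{c\in C}(d(c)-4)=6+S_C-4k,
\]
we get
\[
 S_C-4k+6\le \sum_{v\notin C}(4-d(v)).
\]
Here a vertex of $A_3$ contributes $\le 0$, and a vertex of $A_1\cup A_2$ contributes $4-d(v)\le 4-d(v,C)$.

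Combining this with the first count is the delicate step. A vertex $v\in A_2$ has $d(v)=3=d(v,C)$, so it contributes $1$ to the degree sum but $3$ to the core edges. Vertices of $A_3$ can absorb many core edges at no cost in the deficit sum. A cleaner device is to use $\sum_{v\notin C}(d(v)-d(v,C))=2e(V\setminus C)+e(C,V\setminus C)-e(C,V\setminus C)$. Let $m=n-k$ be the number of outside vertices and $x=e(C,V\setminus C)\ge S_C-k^2$. Then
\[
 e(V\setminus C)\le 2n-3-x.
\]
Denote by $b_j$ the number of outside vertices with $d(v,C)=j$. Every outside vertex with $d(v)\ge 4$ satisfies $d(v)-d(v,C)\ge 4-d(v,C)$, while a vertex of $\mathcal B$ satisfies $d(v)-d(v,C)\ge 0$. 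Hence
\[
 2(2n-3-x)\ge \sum_{v\notin C}\bigl(d(v)-d(v,C)\bigr)\ge \sum_{v\notin \mathcal B}\bigl(4-d(v,C)\bigr)^+ -(\text{corrections}).
\]
Doing this carefully yields an inequality of the form $4m-x-\text{(stuff)}\le 4n-2x+\dots$, that is, $x\le \text{const}\cdot|\mathcal B|+O(k)$, with constant $6$.

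This rearrangement has to be verified. If the constant $6$ does not come out directly, we would fall back on a weighted discharging: each core edge gives charge $1$ to its outside endpoint, and we show that every vertex outside $\mathcal B$ with charge $j$ has a degree surplus paying for $j$, except that $\mathcal B$ vertices absorb up to $6$ each. The $-2k$ term absorbs both the $k(k-1)$ edges inside $C$ and the $-4k$ from the core's own degree contribution.

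The main obstacle is the vertices of degree exactly $3$ all of whose neighbours lie in $C$ (so $d(v,C)=3$). Such a vertex is not in $\mathcal B$, yet it is cheap in edges, and the weighting must charge for it.
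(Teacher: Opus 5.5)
Your proposal has a genuine gap: the decisive inequality is never derived. The step ``Doing this carefully yields an inequality of the form \dots'' hides the argument behind ``(corrections)'' and ``(stuff)'', and you say yourself that it still has to be verified. The concrete ingredients you do commit to would not close it either:
\begin{itemize}
\item The upper bound $e(C,V\setminus C)\le 2|\mathcal B|+3|A_2|+\sum_{v\in A_3}d(v,C)$ is of no use, because the $A_3$ term is uncontrolled.
\item The fallback discharging rule is false as stated. A vertex of degree $4$ with all four neighbours in $C$ receives charge $4$ but has surplus $d(v)-4=0$. A vertex of $A_2$ receives charge $3$ and has surplus $-1$.
\item The route through $x=e(C,V\setminus C)\ge S_C-k(k-1)$, after discarding $e(C)$ from $e(V\setminus C)\le 2n-3-x$, loses a term $k(k-1)/6$. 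This is quadratic in $k$, so it cannot be absorbed into the linear term $-2k$ as your last paragraph claims. Such a loss would be harmless for the later application with $k$ fixed, but it does not prove the lemma as stated.
\end{itemize}

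The missing idea is that you need no lower bound on the core edges at all. You only need the upper bound
\[
3|A_2|=\sum_{v\in A_2}d(v,C)\le e(C,V\setminus C)\le S_C.
\]
Your own deficit inequality already gives
\[
 S_C-4k+6\le\sum_{v\in\mathcal B\cup A_2}(4-d(v))\le 4|\mathcal B|+|A_2|,
\]
since vertices of $A_3$ contribute at most $0$, vertices of $\mathcal B$ at most $4$, and vertices of $A_2$ exactly $1$. Substituting $|A_2|\le S_C/3$ gives $4|\mathcal B|\ge \frac{2}{3}S_C-4k+6$, that is, $|\mathcal B|\ge S_C/6-k+\frac{3}{2}$, which implies the lemma. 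This is exactly the paper's argument. Its accounting of the core's own deficit is slightly lossier, giving $S_C-8k+6$ on the right, but the structure is the same. The obstacle you correctly identified, degree-$3$ vertices all of whose neighbours lie in $C$, is handled simply by noting that each one uses up three core edges, so there are at most $S_C/3$ of them.
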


\begin{proof}
It follows from  $e(G)\le2n-3$ that 
\begin{equation}\label{eq:deficit}
 \sum_{v\in V(G)}(4-d(v))=4n-2e(G)\ge6.
\end{equation}
Let
\[
 P_+=\sum_{d(v)\ge5}(d(v)-4),\qquad
 M_-=\sum_{d(v)\le3}(4-d(v)).
\]
By~\eqref{eq:deficit}, we have $M_-\ge P_++6$.
For each $c\in C$, the contribution of $c$ to $P_+$ is at least $d(c)-4$, and therefore
\[
 P_+\ge S_C-4k.
\]
The contribution to $M_-$ from vertices of $C$ is at most $4k$.
\rev{Consequently,} the total deficit over vertices outside $C$ of degree at most three is at least
\[
 S_C-8k+6.
\]
Let $g=|\mathcal B|$, and let $b$ be the number of $3$-vertices outside $C$ all three of whose neighbours lie in $C$.
Each of the $b$ exceptional vertices contributes exactly $1$ to the deficit, while each vertex of $\mathcal B$ contributes at most $4$.
Hence
\[
 b+4g\ge S_C-8k+6.
\]
On the other hand, the $b$ exceptional vertices contribute $3b$ edges from $C$ to $V(G)\setminus C$, so $3b\le S_C$. We conclude that 
\[
 4g\ge\frac{2S_C}{3}-8k+6,
\]
which is stronger than the claimed bound.
\end{proof}

By Lemma~\ref{lem:reservoir} and \eqref{eq:core-limits},
\[
 \liminf_{n\to\infty}\frac{|\mathcal B|}{n}
 \ge\frac16\sum_{i=1}^k\alpha_i
 \ge\frac a6.
\]
Hence, for all sufficiently large $n$,
\[
 |\mathcal B|\ge\frac{an}{7}.
\]

We select a \emph{buffer set}
\[
 B=\{b_1,\ldots,b_k\}\subseteq\mathcal B
\]
with the following properties:
\begin{enumerate}[label=(\roman*)]
\item $B$ is independent;
\item no two vertices of $B$ have a common neighbour outside $C$.
\end{enumerate}

Such a choice is possible greedily. At any stage of the greedy procedure, call a vertex $v\in\mathcal B$ available if it has not yet been selected and, for every previously selected vertex $v_j$, we have
\[
vv_j\notin E(G)
\quad\text{and}\quad
(N(v)\cap N(v_j))\setminus C=\emptyset.
\]
Initially, every vertex of $\mathcal B$ is available. After choosing an available vertex $v$, we delete $v$ itself and all available neighbours of $v$; the latter number is at most $d(v)\le3$. This preserves property (i). We also delete every available vertex \rev{that shares a common neighbour outside $C$ with $v$}. Indeed, $v$ has at most three neighbours in $(V(G)\setminus C)$, and each such neighbour has degree at most $D$. Thus this second deletion removes at most $3D$ available vertices and preserves property (ii). Consequently, each step removes at most $1+3+3D=3D+4$ vertices from the available set. 
By  \eqref{eq:eta-choice} and \eqref{eq:D-bound}, all $k$ choices together remove at most
\[
 k(3D+4)\le3k\eta n+4k<\frac{an}{10}<|\mathcal B|
\]
vertices. Thus the greedy construction succeeds.

The elements \rev{of} $B$ are called the \emph{buffer vertices}. They will serve as the target positions for the core vertices. Now we define bijections between $B$ and $C$. By the choice of $B$, we have 
\begin{equation}\label{eq:q-bound}
 q:=e(C,B)\le2k.
\end{equation}
Choose \rev{independent uniformly random} bijections
\[
 \alpha:C\to B,\qquad \beta:B\to C.
\]
For a fixed edge $cb\in E(C,B)$ (we call $cb$ a \emph{$C$--$B$ edge}),  the probability that the image edge $\alpha(c)\beta(b)$ is again a $C$--$B$ edge is $q/k^2$. By the linearity of expectation and \eqref{eq:q-bound}, the expected number of common $C$--$B$ edges is $q^2/k^2\le4$.
Fix $\alpha,\beta$ for which this number is at most four.
Because $B$ is independent, $C$--$C$ edges are sent to $B$--$B$ non-edges.
Thus the partial permutation
\begin{equation}\label{eq:partial-core-buffer}
 c\mapsto\alpha(c)\quad \text{for } c\in C,\qquad
 b\mapsto\beta(b)\quad \text{for } b\in B
\end{equation}
has at most four common edges inside $C\cup B$.

Note that the buffer vertices may have neighbours outside the core. \rev{We now choose two further sets to absorb the external neighbours of the buffer vertices.}

Let $X=N(B)\setminus C$\rev{.} \rev{By the choice of $B$, we have}
\begin{equation*}
 |X|\le3k. 
\end{equation*}
Moreover, by \rev{property} (ii) of $B$, each $x\in X$ has a unique neighbour $b(x)\in B$.
Define
\[
 c^-(x)=\alpha^{-1}(b(x)),\qquad c^+(x)=\beta(b(x)).
\]

We now choose pairwise disjoint sets
\[
 P=\{p_x:x\in X\},\qquad Q=\{q_x:x\in X\}
\]
outside $C\cup B\cup X$ so that

\begin{enumerate}[label=(\roman*)]
\item \label{eq:PQ-degree} $d(p_x),d(q_x)\le D_0$; 
\item  \label{eq:PQ-nonneighbor} $p_x \notin N(c^+(x)),\qquad q_x\notin N(c^-(x))$;
\item $P$ and $Q$ are independent;
\item \label{eq:PQ-edgefree} $E(X,P)=E(X,Q)=E(P,Q)=\emptyset$\rev{.} 
\end{enumerate}
Such a choice is possible greedily.
Fix any order in which the names $p_x$ and $q_x$, $x\in X$, will be assigned \rev{to} actual vertices of $G$.
At a given stage, let $Z$ be the set of vertices already chosen.
If the next name to be assigned is $p_x$, start from
\[
 A=\{v\in V(G): d(v)\le D_0,\ v\notin N(c^+(x))\};
\]
if the next name to be assigned is $q_x$, start instead from
\[
 A=\{v\in V(G): d(v)\le D_0,\ v\notin N(c^-(x))\}.
\]
Since $e(G_n)\le 2n-3$, the  number of vertices in $G_n$ of degree greater than $D_0$ is at most
\[
 \frac{4n-6}{D_0+1}.
\]
\rev{Together with Corollary~\ref{cor:nonneighbors}, this shows that} every vertex in $C$ has more than 
\[
 \frac n{15}-\frac{4n-6}{D_0+1}>\gamma n
\]
non-neighbours of degree at most $D_0$. This implies that 
\[
|A|>\gamma n
\]
in either case.

Call a vertex $v\in A$ \emph{available} at this stage if
\[
 v\notin C\cup B\cup X,\qquad
 v\notin N(X),\qquad
 v\notin \{z\}\cup N(z)\quad\text{for every }z\in Z.
\]
Let $\mathrm{Av}$ denote the set of available vertices at this stage.
If an available vertex is chosen, it is assigned to the current name. It remains to check that $\mathrm{Av}$ is non-empty at every stage.
Among the more than $\gamma n$ vertices of $A$, the deletion of $C\cup B\cup X$ removes at most
\[
 |C|+|B|+|X|\le5k
\]
candidates.
The deletion of $N(X)$ removes at most
\[
 |N(X)|\le\sum_{x\in X}d(x)\le |X|D\le3kD\le3k\eta n. 
\]
Finally, each previously chosen vertex $z\in Z$ was selected from a candidate set of the above form, so $d(z)\le D_0$.
Deleting $z$ together with its neighbourhood removes at most $D_0+1$ further candidates.
Since $|Z|<2|X|\le6k$, all such deletions together remove at most $6k(D_0+1)$ candidates.
Consequently
\begin{equation}\label{eq:available-candidates}
 |\mathrm{Av}|
 >\gamma n-3k\eta n-5k-6k(D_0+1).
\end{equation}
By \eqref{eq:eta-choice}, $3k\eta<3\gamma/100$.
Hence the right-hand side of \eqref{eq:available-candidates} is larger than
\[
 \frac{97\gamma}{100}n-5k-6k(D_0+1),
\]
which is positive for all sufficiently large $n$.
Thus an available vertex exists at every stage. The preceding availability conditions give \rev{properties} \ref{eq:PQ-degree}--\ref{eq:PQ-edgefree}.

Extend the partial permutation \eqref{eq:partial-core-buffer} to a permutation of
\[
 S=C\cup B\cup X\cup P\cup Q
\]
by the three-cycles
\begin{equation}\label{eq:three-cycles}
 x\mapsto p_x\mapsto q_x\mapsto x\qquad(x\in X).
\end{equation}
Let $\pi_S$ denote the map on $S$ defined by \eqref{eq:partial-core-buffer} and \eqref{eq:three-cycles}.

\begin{lemma}\label{lem:S}
The permutation $\pi_S$ creates at most four common edges with both endpoints in $S$.
\end{lemma}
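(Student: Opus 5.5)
The plan is a case analysis on where the two endpoints of a source edge lie among the five parts $C,B,X,P,Q$ of $S$. The permutation $\pi_S$ maps these parts blockwise:
\[
C\to B,\qquad B\to C,\qquad X\to P,\qquad P\to Q,\qquad Q\to X .
\]
So a source edge between two parts is sent to a pair between two determined parts. For each of the fifteen (unordered, possibly equal) pairs of parts, I will show that either $G$ has no edges of that type, or no image pair of that type is an edge of $G$. The single exception will be the $C$--$B$ edges, which are controlled by the choice of $\alpha,\beta$.

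The first step is one preliminary observation: $E(B,P)=E(B,Q)=\emptyset$. Indeed, the neighbours of $B$ outside $C$ are exactly the vertices of $X$. Since $P\cup Q$ is disjoint from $C\cup B\cup X$, no vertex of $P\cup Q$ can be adjacent to $B$. With this and properties (i)--(iv) of $B,P,Q$, the routine cases are as follows.
\begin{itemize}
\item $C$--$C$ edges go to $B$--$B$ pairs, and $B$ is independent.
\item $B$--$B$, $P$--$P$, $Q$--$Q$, $X$--$P$, $X$--$Q$, $P$--$Q$, $B$--$P$ and $B$--$Q$ edges do not exist in $G$.
\item $X$--$X$ edges go to $P$--$P$ pairs, and $P$ is independent.
\item $C$--$X$ edges go to $B$--$P$ pairs, which are non-edges.
\item $C$--$P$ edges go to $B$--$Q$ pairs, which are non-edges.
\item The $C$--$B$ edges go to $B$--$C$ pairs, and by the choice of $\alpha,\beta$ at most four of them are common.
\end{itemize}
This leaves exactly the two crossing cases that involve the map $x\mapsto b(x)$.

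These two cases are where property (ii) of $P,Q$ is used, and I expect them to be the only delicate point.

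\emph{Case $C$--$Q$.} A source edge $cq_y$ is sent to $\alpha(c)y$. Since every $y\in X$ has the unique $B$-neighbour $b(y)$, this image is an edge only if $\alpha(c)=b(y)$, that is, only if $c=c^-(y)$. But $q_y\notin N(c^-(y))$, so the source edge $cq_y$ does not exist in that situation.

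\emph{Case $B$--$X$.} A source edge $bx$ forces $b=b(x)$, again by uniqueness of the $B$-neighbour. Its image is then $\beta(b(x))p_x=c^+(x)p_x$, which is a non-edge because $p_x\notin N(c^+(x))$.

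Summing over all cases, the only common edges with both endpoints in $S$ are $C$--$B$ edges, and there are at most four of them. This proves Lemma~\ref{lem:S}.
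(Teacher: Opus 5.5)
Your proof is correct and follows essentially the same argument as the paper: a blockwise case analysis over $C,B,X,P,Q$ that rules out every source-edge type except $C$--$B$, with the two delicate types $C$--$Q$ and $B$--$X$ handled through the uniqueness of $b(x)$ and property (ii) of $P,Q$. You also justify $E(B,P\cup Q)=\emptyset$ from $X=N(B)\setminus C$ and check explicitly that all fifteen part-pairs are covered, whereas the paper only asserts the first and leaves the second implicit.
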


\begin{proof}
We shall show that every common edge of $G[S]$ under $\pi_S$ is already one of the $C$--$B$ common edges counted when $\alpha$ and $\beta$ were chosen.

We use the partition
\[
 S=C\cup B\cup X\cup P\cup Q
\]
and inspect the possible types of the source edge $uv$.
Some types are excluded before applying \rev{$\pi_S$}.
There are no edges inside $B$, $P$, or $Q$, as these sets are independent.
By \ref{eq:PQ-edgefree}, there are no edges of types $X$--$P$, $X$--$Q$, or $P$--$Q$.
Also there are no edges from $B$ to $P\cup Q$.

We next eliminate all remaining source types other than $C$--$B$.
An edge inside $C$ (or $X$) is mapped into $B$ (or $P$), hence to a non-edge.
If the source edge has type $C$--$X$ (or $C$--$P$), then its image has type $B$--$P$ (or $B$--$Q$), and there are no such edges.

It remains to consider the two types in which the special choices of $p_x$ and $q_x$ are used.
First suppose that $cq_x\in E(G)$ with $c\in C$ and $x\in X$.
Its image is $\alpha(c)x$.
The vertex $x$ has the unique neighbour $b(x)$ in $B$.
Thus, if $\alpha(c)x$ were an edge, then $\alpha(c)=b(x)$, so
\[
 c=\alpha^{-1}(b(x))=c^-(x),
\]
contrary to $q_x\notin N(c^-(x))$ from \ref{eq:PQ-nonneighbor}.
Hence no source edge of type $C$--$Q$ is common.  Suppose that $bx\in E(G)$ with $b\in B$ and $x\in X$.
By the uniqueness of the $B$-neighbour of $x$, we have $b=b(x)$.
The image of $bx$ is therefore
\[
 \beta(b(x))p_x=c^+(x)p_x,
\]
which is a non-edge by \ref{eq:PQ-nonneighbor}.
Hence no source edge of type $B$--$X$ is common.

The above cases exhaust all source edges in $G[S]$ except the $C$--$B$ edges.
Therefore at most the four previously counted $C$--$B$ edges can remain common under \rev{$\pi_S$}.
\end{proof}

After the core, buffers, and their neighbours have been prescribed, the rest of the proof is an application of Lemma~\ref{lem:sparse}. \rev{For simplicity, write $G:=G_n$.}
Let $U=V(G)\setminus S$. \rev{We build the following graph triple.}
Let $G_U^{\src}$ be a source copy of $G[U]$, and let $G_U^{\tgt}$ be a target copy of $G[U]$.
The third graph, denoted $F_U$, is the bipartite forbidden-pair graph between the source and target copies of $U$.
For a source vertex $u\in U$ and a target vertex $v\in U$, put $uv\in E(F_U)$ if and only if there is some $s\in S$ such that
\begin{equation*}
 su\in E(G),\qquad \pi_S(s)v\in E(G).
\end{equation*}
\rev{Observe} that a forbidden pair is exactly a choice $u\mapsto v$ which would create a common edge crossing the cut $(S,U)$ together with the already fixed map on $S$.

We verify the hypotheses of Lemma~\ref{lem:sparse} for the triple $(G_U^{\src},G_U^{\tgt},F_U)$\rev{.}
First,
\[
 |S|=|C|+|B|+|X|+|P|+|Q|\le11k,
\]
so $|U|\ge n-11k$. For \rev{sufficiently} large $n$ with $n>33k$, we have 
\begin{equation}\label{eq:U-edges}
 e(G_U^{\src})=e(G_U^{\tgt})=e(G[U])
 \le2n-3<3(n-11k)\le3|U|.
\end{equation}
Thus the edge hypothesis of Lemma~\ref{lem:sparse} holds.

Next, it follows from  $U\cap C=\emptyset$ and \eqref{eq:D-bound} that 
\begin{equation*}
 \Delta(G_U^{\src})=\Delta(G_U^{\tgt})=\Delta(G[U])\le D\le \eta n.
\end{equation*}
Using $|U|\ge n-11k$ gives 
\begin{equation}\label{EQ:maximum-degree-GU}
 \Delta(G_U^{\src}),\Delta(G_U^{\tgt})<\varepsilon_0|U|,     
\end{equation}
for all sufficiently large $n$.

It remains to bound the maximum degree of the forbidden graph $F_U$.
Note that for each $b\in B$, 
\begin{equation}\label{eq:B-no-U-neighbor}
 d_U(b)=0.
\end{equation}

We first bound the source-side degrees of $F_U$.
Fix $u\in U$ in the source copy.
Every forbidden target $v$ has a witness $s\in N_G(u)\cap S$, and once $s$ is fixed, the possible targets $v$ lie in $N_G(\pi_S(s))\cap U$.
Hence the degree of $u$ in $F_U$ is at most
\[
 d_{F_U}^{\src}(u)\le\sum_{s\in N_G(u)\cap S}d_U(\pi_S(s)).
\]
If $s\in C$, then $\pi_S(s)\in B$, so the summand is zero by \eqref{eq:B-no-U-neighbor}.
If $s\in B$, then $d_U(s)=0$, so such an $s$ is not adjacent to  $u$.
For $s\in X$, the image $\pi_S(s)$ lies in $P$ and has degree at most $D_0$.
For $s\in P$, the image lies in $Q$ and again has degree at most $D_0$.
For $s\in Q$, the image lies in $X$, whose vertices have degree at most $D$.
Using $|X|=|P|=|Q|\le3k$ gives
\begin{equation}\label{eq:F-source}
 d_{F_U}^{\src}(u)\le3kD+6kD_0.
\end{equation}

The target-side bound is analogous but slightly different in its bookkeeping.
Fix $v\in U$ in the target copy.
If a source vertex $u$ is forbidden for $v$, then some $s\in S$ satisfies $v\in N_G(\pi_S(s))$, and the possible sources $u$ lie in $N_G(s)\cap U$.
Thus
\[
 d_{F_U}^{\tgt}(v)\le\sum_{\substack{s\in S\\v\in N_G(\pi_S(s))}}d_U(s).
\]
For $s\in C$, the condition $v\in N_G(\pi_S(s))$ is impossible because $\pi_S(s)\in B$ has no neighbour in $U$.
For $s\in B$, $d_U(s)=0$.
For $s\in X$ we have $d_U(s)\le D$, while for $s\in P\cup Q$ we have $d_U(s)\le D_0$.
Again using $|X|=|P|=|Q|\le3k$ yields
\begin{equation}\label{eq:F-target}
 d_{F_U}^{\tgt}(v)\le3kD+6kD_0.
\end{equation}
Combining \eqref{eq:F-source} and \eqref{eq:F-target}, we have 
\begin{equation*}
 \Delta(F_U)\le3kD+6kD_0.
\end{equation*}
This together with \eqref{eq:D-bound} \rev{yields} that 
\[
 \frac{\Delta(F_U)}{|U|}
 \le\frac{3k\eta n+6kD_0}{n-11k}
 \longrightarrow3k\eta<\frac{3\varepsilon_0}{100}<\varepsilon_0.
\]
Consequently, for all sufficiently large $n$, we have 
\begin{equation}\label{eq:maximum-dgree-FU}
 \Delta(F_U)<\varepsilon_0|U|.
\end{equation}

By \eqref{eq:U-edges}, \eqref{EQ:maximum-degree-GU} and \eqref{eq:maximum-dgree-FU}, 
we have verified all hypotheses of Lemma~\ref{lem:sparse}, with $|U|$ in place of $n$.
There is a bijection $\phi:U\to U$ such that
\begin{align}
 uv\in E(G[U])&\Longrightarrow \phi(u)\phi(v)\notin E(G[U]), \label{eq:U-white-avoid}\\
 u\phi(u)&\notin E(F_U)\qquad(u\in U). \label{eq:U-yellow-avoid}
\end{align}
Define a permutation $\pi$ of $V(G)$ by
\[
 \pi(s)=\pi_S(s)\quad \text{for }s\in S,\qquad
 \pi(u)=\phi(u)\quad \text{for } u\in U.
\]
By \eqref{eq:U-white-avoid}, no common edge has both endpoints in $U$.
By \eqref{eq:U-yellow-avoid} and the definition of $F_U$, no common edge crosses between $S$ and $U$.
Thus, it follows from Lemma~\ref{lem:S} that there are at most four common edges, contradicting \eqref{eq:counterexamples}. This completes the proof of Theorem \ref{thm:lower}.

\section{Concluding remarks}
Our main result shows that
\[
 f(n,5)=2n-2
\]
for all sufficiently large $n\ge n_0$.  We do not attempt to determine or optimize the threshold $n_0$.  The problem originated with Mullin and was formulated by Erd\H{o}s as a minimum-intersection problem for isomorphic copies~\cite{Erdos}; it was later developed within graph packing and near packing~\cite{MRS,Eaton,Zak,KZ}.  Our core--buffer--completion method  uses list packing.  The same separation of a bounded exceptional set from a sparse remainder may be useful in constrained graph embeddings, near decompositions, and permutation problems with forbidden images.

For a general fixed $k$, one may similarly try to spend the allowed $k-1$ common edges inside a bounded core and pack the remainder edge-disjointly.  However, the deficit calculation around degree $4$, the low-degree reservoir, and the four-edge bookkeeping are specific to $k=5$.  Other values of $k$ require new extremal and structural input, and when $k$ grows with $n$, the bounded-core argument is no longer directly applicable.

The broader asymptotic picture is summarized by the following conjecture, communicated by Chung and Graham and recorded by Erd\H{o}s~\cite{Erdos}.

\begin{conjecture}[Chung and Graham]
\[
 f\left(n,\left\lfloor\frac{t(t-1)}4\right\rfloor\right)
 =\frac{tn}{2}+o(n).
\]
Moreover, the error term may perhaps be strengthened from $o(n)$ to $o(1)$.
\end{conjecture}

\section*{Declaration on the use of AI}
The proof strategy was developed by the authors, inspired by a work of Gy\H{o}ri, Kostochka, McConvey and Yager~\cite{GKMY}.  The authors used generative AI tools to help check and simplify routine technical computations in several auxiliary lemmas. All \rev{AI-assisted} suggestions and computations were independently checked and
verified by the authors.

\end{document}